\begin{document}

\title{Manhattan and Chebyshev flows}{}

\newtheorem{theorem}{Theorem}
\newtheorem{proposition}[theorem]{Proposition}
\newtheorem{lemma}[theorem]{Lemma}
\newtheorem{definition}[theorem]{Definition}
\newtheorem{example}[theorem]{Example}
\newtheorem{corollary}[theorem]{Corollary}	
\newtheorem{conjecture}[theorem]{Conjecture}
\newtheorem{remark}{Remark}
\newtheorem{problem}{Problem}
\newtheorem{claim}{Claim}

\author{Luk\'a\v s G\'aborik\footnote{Comenius University, Bratislava, Slovakia, \texttt{gaborik5@uniba.sk}} , Sascha Kurz\footnote{Universit\"{a}t Bayreuth, Bayreuth, Germany, \texttt{Sascha.Kurz@uni-bayreuth.de}} , Giuseppe Mazzuoccolo\footnote{Universit\`{a} di Modena e Reggio Emilia, Italy, \texttt{giuseppe.mazzuoccolo@unimore.it}} ,\\Jozef Rajn\'{i}k\footnote{Comenius University, Bratislava, Slovakia, \texttt{jozef.rajnik@fmph.uniba.sk}}, Florian Rieg\footnote{Goethe University Frankfurt, Germany, \texttt{rieg@math.uni-frankfurt.de}}}

\vspace*{-2cm}
{\let\newpage\relax\maketitle}
\maketitle

\begin{abstract}
We investigate multidimensional nowhere-zero flows of bridgeless graphs. 
By extending the established use of the Euclidean norm, this paper considers the Manhattan and Chebyshev norms, leading to the definition of the flow numbers $\Phi_d^1(G)$ and $\Phi_d^\infty(G)$, respectively.
These flow numbers are always rational and in two dimensions, they distinguish between cubic graphs that are 3-edge-colourable and those that are not. We also prove that, for any bridgeless graph $G$, the two values $\Phi^1_2(G)$ and $\Phi^\infty_2(G)$ are the same. We give new upper and lower bounds and structural results, and we find connections with cycle covers. Finally, we introduce the idea of $t$-flow-pairs, which comes from a method used in Seymour's proof of the 6-flow theorem, and we propose new conjectures that could be stronger than Tutte's famous 5-flow conjecture.

\paragraph{Keywords:} nowhere-zero flow, Manhattan norm, Chebyshev norm, cycle double cover, edge-colourability, snark
\end{abstract}

\section{Introduction}

A $\Gamma$-flow, for an (additive) Abelian group $\Gamma$, consists of an orientation of the edges of a graph $G$ and a mapping $\varphi\colon E(G) \rightarrow \Gamma$ such that for each vertex $v$ of $G$, the sum of the incoming values equals the sum of the outgoing values.
When the orientation is clear from the context, we often identify the flow with the mapping $\varphi$.
The set of allowed flow values is often restricted to avoid trivial cases.
If the flow value of each edge is non-zero, such a flow is called a \emph{nowhere-zero $\Gamma$-flow}.
Only a bridgeless graph can admit a nowhere-zero $\Gamma$-flow.
More generally, we can restrict the flow values allowed to some subset of $\Gamma$. For instance, a \emph{nowhere-zero $k$-flow} is a $\mathbb{Z}$-flow with the allowed set $\{\pm 1, \pm 2, \dots, \pm (k - 1)\}$, a \emph{circular $r$-flow} is an $\mathbb{R}$-flow with values from $[- r + 1, -1] \cup [1, r - 1]$, and a \emph{unit-vector} flow is an $\mathbb{R}^d$-flow using only unit vectors, for some integer $d \ge 1$. 

Recently, Mattiolo et al. \cite{complex_flows} introduced a \emph{$d$-dimensional nowhere-zero $r$-flow} (an $(r, d)$-NZF for short) as an $\mathbb{R}^d$-flow using vectors whose Euclidean norm lies in the interval $[1, r - 1]$. This concept generalises circular flows ($d=1$), complex flows ($d=2$) and unit-vector flows ($r=2$);  see \cite{7d_flows}. The $d$-dimensional flow number $\Phi_d(G)$ of a bridgeless graph $G$ is the infimum of all $r$ for which $G$ admits a $(r, d)$-NZF. 

The following conjecture by Tutte is the central open problem in this area.

\begin{conjecture} \label{conj:Tutte} \emph{\cite{tutte}}
    Let $G$ be a bridgeless graph. Then, $\Phi_1(G) \leq 5$.\label{conj:1d_flows}
\end{conjecture}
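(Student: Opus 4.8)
The plan is to follow the classical reductions and then confront the genuine obstacle, with the honest caveat that Conjecture~\ref{conj:Tutte} is one of the oldest open problems in the area; what follows is a strategy sketch rather than a complete argument. First I would reduce to cubic graphs: subdividing an edge or suppressing a vertex of degree two does not change the circular flow number, and a vertex of degree at least four can be split so that admitting a nowhere-zero $5$-flow is only made easier, so it suffices to prove $\Phi_1(G)\le 5$ for every bridgeless cubic graph $G$. Next I would reduce to snarks: if a cubic $G$ is $3$-edge-colourable, the three colour classes give a nowhere-zero $\mathbb{Z}_2\times\mathbb{Z}_2$-flow, hence $\Phi_1(G)\le 4<5$, so the conjecture is equivalent to its restriction to cyclically $4$-edge-connected cubic graphs of girth at least $5$ with no proper $3$-edge-colouring, and via known reductions one may additionally assume large girth and higher cyclic connectivity.

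The substantive step would start from Seymour's $6$-flow theorem, which already yields $\Phi_1(G)\le 6$ for every bridgeless $G$, equivalently a nowhere-zero $\mathbb{Z}_2\times\mathbb{Z}_3$-flow: a $\mathbb{Z}_3$-flow that is nowhere zero off an even subgraph $H$, together with a $\mathbb{Z}_2$-flow carried by $H$. To push $6$ down to $5$ one wants the parity part and the modular part to overlap, i.e.\ to choose the even subgraph $H$ and the accompanying $\mathbb{Z}_3$-flow so that the zero set of the latter on $E(H)$ can be corrected by adding $\pm 1$ along the circuits of $H$ without ever producing an integer flow value of absolute size at least $5$. Making this precise is exactly where the cycle-cover connections and the $t$-flow-pair machinery developed later in the paper are the natural tools, the idea being to extract from a suitable short cycle cover or $2$-factor an even subgraph that forces the combined integer flow into $\{\pm 1,\pm 2,\pm 3,\pm 4\}$.

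The main obstacle is precisely this last step: there is no known way to control the interaction between the parity ($\mathbb{Z}_2$) and the modular ($\mathbb{Z}_3$) components tightly enough on a general snark, and the obvious candidate even subgraphs there --- $2$-factors consisting of odd circuits --- are exactly what obstructs the naive gluing. I would expect the reductions to be routine; the entire difficulty is concentrated in producing the right even subgraph or cycle cover, which is why the conjecture remains open and why, rather than attempting a proof, the paper instead isolates potentially stronger statements in the multidimensional Manhattan and Chebyshev settings.
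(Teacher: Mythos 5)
There is no proof to compare against: the statement you were given is Tutte's $5$-flow conjecture, which the paper states as Conjecture~\ref{conj:Tutte} with a citation to Tutte and does not prove --- it is one of the central open problems of the field. Your proposal is commendably honest about this, and the preliminary reductions you describe are correct and standard: suppression of degree-$2$ vertices and vertex splitting reduce the problem to cubic graphs, $3$-edge-colourable cubic graphs admit a nowhere-zero $4$-flow via the $\mathbb{Z}_2\times\mathbb{Z}_2$-flow from the colour classes, and the known reductions to cyclically $k$-edge-connected snarks of large girth are in the literature. But everything after that is a description of the difficulty rather than an argument. The passage from Seymour's $\mathbb{Z}_2\times\mathbb{Z}_3$ decomposition to a nowhere-zero $5$-flow --- ``choosing the even subgraph $H$ and the accompanying $\mathbb{Z}_3$-flow so that the zero set can be corrected'' --- is not a step with a missing detail; it is the entire content of the conjecture, and no construction you sketch actually produces the required even subgraph or controls the interaction of the two components.

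For what it is worth, the paper's own stance toward this statement is exactly the one your last paragraph anticipates: rather than attack it directly, the authors introduce $t$-flow-pairs and show in Proposition~\ref{prop:nzf_from_two_circulations_generalised} that a $1/2$-flow-pair yields a $(5,1)$-NZF, so that Conjecture~\ref{conj:12-flow-pair} (every bridgeless graph has a $1/2$-flow-pair) is a proposed strengthening of Tutte's conjecture, verified computationally on small snarks. If you want to turn your sketch into something submittable, the concrete target would be Problem~\ref{problem:weakerthanTutte}: establish the existence of a $t$-flow-pair for some $1/2 < t < 1$, which would give an unconditional bound strictly between $5$ and $6$. As written, your proposal should not be presented as a proof of the conjecture.
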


An analogous conjecture in higher dimensions was proposed by Jain. 

\begin{conjecture} \label{conj:Jain} \emph{\cite{3d_flows}}
    Let $G$ be a bridgeless graph. Then, $\Phi_d(G) = 2$ for any $d\geq 3$.\label{conj:3d_flows}
\end{conjecture}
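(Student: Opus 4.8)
The goal is Jain's conjecture: every bridgeless $G$ has $\Phi_d(G)=2$ for $d\ge 3$, i.e.\ $G$ carries a nowhere-zero $\mathbb{R}^d$-flow using only unit vectors. Since enlarging the ambient space never hurts (an isometric embedding $\mathbb{R}^3\hookrightarrow\mathbb{R}^d$ preserves unit vectors and Kirchhoff's law) and $\Phi_d(G)\ge 2$ always, it suffices to prove $\Phi_3(G)=2$. The plan is to first reduce to cubic graphs: degree-two vertices are suppressed, and a vertex of degree $\ge 4$ is expanded into a small tree of cubic vertices, which requires a gluing lemma saying that any balanced family of unit vectors meeting a vertex can be split into balanced cubic triples along that tree (with the internal edges allowed to take arbitrary unit values). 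I expect this to be true but it needs a careful check. Among cubic graphs one should dispatch the $3$-edge-colourable ones first, where a nowhere-zero $4$-flow (equivalently a $\mathbb{Z}_2^2$-flow) is at hand and one tries to "round" it to unit vectors in $\mathbb{R}^3$.

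Already here the key geometric fact appears: at a cubic vertex the three incident edge-vectors, signed by the orientation, must sum to $0$, and for unit vectors $u_1+u_2+u_3=0$ forces $u_i\cdot u_j=-\tfrac12$, i.e.\ the three vectors form a planar equiangular tripod; the $2$-plane of the tripod may rotate from vertex to vertex. So a unit-vector flow on a cubic graph is exactly a flow valued in the space of $120^\circ$-tripods of $\mathbb{R}^3$. A single fixed tripod suffices essentially only when $G$ is bipartite, so even the $3$-edge-colourable case needs a genuinely twisting tripod assignment. For snarks there is no rigid solution at all, and the approach I would take is to start from a known structural decomposition --- a nowhere-zero $6$-flow of Seymour (a $\mathbb{Z}_2\times\mathbb{Z}_3$-flow $(\alpha,\beta)$ with $\mathrm{supp}(\alpha)\cup\mathrm{supp}(\beta)=E(G)$), or the $\mathbb{Z}_2^3$-flow of Jaeger's $8$-flow theorem --- and deform it. The $\mathbb{Z}_2^3$-flow gives three even subgraphs $C_1,C_2,C_3$ covering $E(G)$, each a union of edge-disjoint circuits; orienting each circuit cyclically yields an integer $\mathbb{R}^3$-flow whose edge-values lie in $\{-1,0,1\}^3\setminus\{0\}$, hence a $(\sqrt3+1,3)$-NZF and the cheap bound $\Phi_3(G)\le 1+\sqrt3$. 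The whole content of the conjecture is then to rotate and rescale these vectors, edge by edge around each circuit, so that every norm becomes exactly $1$ while all Kirchhoff equations survive; the $t$-flow-pair scaffold from Seymour's proof (build up from one circuit by adding ears) looks like the right device for propagating such local choices globally.

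The main obstacle is precisely that unit-vector flows admit no substitution or composition lemma: one cannot glue partial solutions, because the boundary vectors of the pieces must coincide and there is a continuum of mutually incompatible ways to choose them. In the tripod reformulation one is asking for a discrete flat connection with prescribed holonomy around every circuit of a snark, and controlling such holonomy is exactly the type of global constraint that has kept the $5$-flow conjecture open --- so I expect a full proof to need either a genuinely new gluing lemma for unit-vector flows or a new structural input on snarks. Realistic intermediate targets are: prove the conjecture for cyclically $4$- or $5$-edge-connected cubic graphs; improve the bound $\Phi_3(G)\le 1+\sqrt3$ toward $2$; or settle the $3$-edge-colourable cubic case by explicitly producing the required twisting tripod assignment, which I believe is within reach and would isolate where the real difficulty for snarks begins.
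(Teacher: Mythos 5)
This statement is not a theorem of the paper: it is Jain's conjecture, quoted from \cite{3d_flows}, and it is open. The paper offers no proof of it, so there is nothing to compare your argument against, and your own text candidly concedes that it does not constitute a proof either --- it is a research plan whose central step (``rotate and rescale these vectors, edge by edge around each circuit, so that every norm becomes exactly $1$ while all Kirchhoff equations survive'') is exactly the part nobody knows how to do. That is the genuine gap: there is no gluing or substitution lemma for unit-vector flows, as you yourself observe, and without one the deformation of the $\mathbb{Z}_2^3$-flow into a unit-vector flow is an aspiration, not an argument.

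Your intermediate observations are nevertheless correct and worth calibrating against what the paper actually does. The reduction to $d=3$ via isometric embedding is fine, the tripod condition $u_i\cdot u_j=-\tfrac12$ at cubic vertices is right, and the bound $\Phi_3(G)\le 1+\sqrt3$ from Jaeger's $8$-flow theorem is sound. Note that this last construction --- edge values in $\{-1,0,1\}^3\setminus\{\mathbf{0}\}$ --- is precisely how the paper proves the Chebyshev analogue $\Phi_3^\infty(G)=2$ unconditionally (Proposition~\ref{prop:chebyshev_bound_3_dim}): under $\|\cdot\|_\infty$ all such vectors already have norm exactly $1$, so no rescaling is needed. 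This is the paper's whole point in introducing non-Euclidean norms: the step you identify as the obstacle (forcing the Euclidean norm of $(1,1,1)$ down from $\sqrt3$ to $1$) simply disappears for the Chebyshev norm, and even for the Manhattan norm the paper can only handle it conditionally on the $5$-OCDC conjecture (Proposition~\ref{prop:manhattan_3d}). If you want a tractable target in the spirit of your proposal, the paper's Table~\ref{table:upper_bounds} and Proposition~\ref{prop:phi_1_3_bound} show the kind of partial progress that is currently within reach.
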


Conjectures \ref{conj:1d_flows} and  \ref{conj:3d_flows} leave the case of $2$-dimensional flows as a natural subject of investigation. Mattiolo et al.\ \cite{complex_flows} proved that for every bridgeless graph~$G$, we have $\Phi_2(G) \le 1 + \sqrt{5}$, and left as a problem whether this upper bound can be improved to $\Phi_2(G) \le 1 + \left(1 + \sqrt{5}\right)/2$, i.e., the square of the golden ratio.

Two major differences between the 1-dimensional and 2-dimensional cases were already identified in \cite{complex_flows}.
First of all, despite the use of real numbers, the 1-dimensional flow number is always a rational number, see~\cite{goddyn_fractional_nzf}. Furthermore, in the $1$-dimensional case, the flow number directly distinguishes between $3$-edge-colourable cubic graphs, which have flow number at most $4$, and non-$3$-edge-colourable ones, which necessarily have the flow number strictly greater than $4$.
The $2$-dimensional flows lack both of these properties. For instance, the complete graph $K_4$ is $3$-edge-colourable, yet $\Phi_2(K_4) = 1 + \sqrt{2}$. On the other hand, there exist non-$3$-edge-colourable cubic graphs (for example the Flower snark $J_5$, see~\cite[p. 9]{complex_flows}) with smaller $2$-dimensional flow numbers.

A natural source of irrationality in the $2$-dimensional flow number arises from the use of the Euclidean norm in its definition. So far, multidimensional flows have only been studied in this setting. This is an initial motivation for the exploration of alternative geometric frameworks. In this paper, we initiate such a study by examining multidimensional flows defined using the \emph{Manhattan} and \emph{Chebyshev} norms. The other main motivation for this approach is the idea that flow values computed in different norms may yield lower bounds for the Euclidean case, offering new insight into its structure. We further elaborate on this idea in Section \ref{sec:openproblems}.

Recall that $\|\boldsymbol{x}\|_1 = \sum_{i = 1}^d |x_i|$ and $\|\boldsymbol{x}\|_\infty = \max_{i = 1}^d |x_i|$ are known as Manhattan and Chebyshev norm of $\boldsymbol{x} = (x_1, \dots, x_d)$, respectively.
Let $G=(V, E)$ be a bridgeless graph. Let $d$ be a positive integer and $r\geq 2$ be a real constant. A \emph{$d$-dimensional Manhattan nowhere-zero $r$-flow} (or $(r, d)$-MNZF for short) is an $\mathbb{R}^d$-flow $\varphi$ such that $1\leq\|\varphi(e)\|_1\leq r-1$ for each edge $e\in E$. Analogously, a \emph{$d$-dimensional Chebyshev nowhere-zero $r$-flow} (or $(r, d)$-ChNZF for short) satisfies the condition $1\leq\|\varphi(e)\|_\infty\leq r-1$ for each edge $e\in E$.
A \emph{$d$-dimensional Manhattan flow number} of $G$ and a \emph{$d$-dimensional Chebyshev flow number} of $G$ are
\begin{align*}
    \Phi_d^1(G) &:= \inf\{r\mid\exists(r, d)\text{-MNZF on }G\}\text{ and }\\ \Phi_d^\infty(G) &:= \inf\{r\mid\exists(r, d)\text{-ChNZF on }G\},
\end{align*}
respectively. As in the Euclidean case, the infimum can be replaced by a minimum \cite[p. 2]{complex_flows}.

In this paper, we show that these flows provide a better generalisation of circular flows in the two-dimensional case: indeed, the flow number is always rational and strictly smaller for $3$-edge-colourable graphs than for non-colourable ones. The former condition holds for any dimension.

Section~\ref{sec:rationality} is devoted to the proof of rationality and providing some normal forms of the flows. 
Moreover, we prove that $\Phi^1_2(G) = \Phi^{\infty}_2(G)$ for every bridgeless graph $G$ (see Corollary~\ref{cor:manhequalcheb}).

In Section \ref{sec:upperbounds}, we study upper bounds for these parameters depending on the dimension and explore how their values relate to the existence of certain cycle covers in bridgeless graphs.

Section~\ref{sec:2dim} focuses on the two-dimensional case. Beyond characterising $3$-edge-colourable cubic graphs, we establish a lower bound of $\Phi^\infty_2(G) \ge 2 + 1/\lfloor (n - 2)/4 \rfloor$ for any bridgeless graph $G$ of order $n$ that is not $3$-edge-colourable. This is an analogy to the bound provided in~\cite[p. 14]{cycle_rank} for the circular flow number.

Section \ref{sec:circular_decompositions} introduces the notion of $t$-flow-pairs as a generalisation of the idea underlying Seymour's proof of the $6$-flow theorem. We also propose sufficient conditions that may lead to improved upper bounds for $\Phi^1_2(G)$ and $\Phi^1_1(G)$. This also brings an interesting approach to Tutte's $5$-flow conjecture.

The final section of the paper is devoted to open problems and directions for future research.

\section{Rationality of flow numbers}\label{sec:rationality}

In this section, we prove that the two parameters $\Phi_d^\infty(G)$ and $\Phi_d^1(G)$ are always rational numbers, in contrast to what occurs in the Euclidean case. Moreover, we show that the flow values can be chosen as integer multiples of some suitable rational numbers.

\begin{theorem}
    \label{th:rationality}
    For each bridgeless graph $G$ and each integer $d \ge 1$, the values $\Phi_d^\infty(G)$ and $\Phi_d^1(G)$ are rational.
\end{theorem}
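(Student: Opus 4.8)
The plan is to reduce the computation of $\Phi_d^\infty(G)$ and $\Phi_d^1(G)$ to a finite collection of linear programs with integer coefficient data, and then to invoke standard polyhedral theory. Fix once and for all a reference orientation of $G$; since reversing an edge and negating its flow value leaves both norms unchanged, an $\mathbb{R}^d$-flow is then the same thing as a map $\varphi=(\varphi_1,\dots,\varphi_d)\colon E(G)\to\mathbb{R}^d$ whose coordinate functions $\varphi_i$ each satisfy Kirchhoff's law at every vertex, a system of homogeneous linear equations with coefficients in $\{-1,0,1\}$. The only obstacle to linearity is the presence of absolute values and maxima in the norm constraints, so I would ``guess them away'': for each edge $e$ and each coordinate $i$ fix a sign $\sigma_{e,i}\in\{+1,-1\}$ and restrict attention to flows with $\sigma_{e,i}\varphi_i(e)\ge 0$; on this orthant $|\varphi_i(e)|=\sigma_{e,i}\varphi_i(e)$ is linear. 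For the Manhattan norm this already makes both inequalities $1\le\|\varphi(e)\|_1\le r-1$ linear. For the Chebyshev norm, $\|\varphi(e)\|_\infty\le r-1$ becomes the linear system $\sigma_{e,i}\varphi_i(e)\le r-1$ over all $i$, while the lower bound $\|\varphi(e)\|_\infty\ge 1$ requires one further guess: a ``witness'' coordinate $j(e)$ with $\sigma_{e,j(e)}\varphi_{j(e)}(e)\ge 1$. There are only finitely many such combinatorial choices ($2^{d|E(G)|}$ sign patterns, times $d^{|E(G)|}$ witness choices in the Chebyshev case).

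For each fixed choice of reference orientation and of these signs and witnesses, minimising $r$ over the resulting system is a linear program $L$ in the variables $\varphi$ and $r$ with integer constraint matrix. I would then show that $\Phi_d^\infty(G)$, respectively $\Phi_d^1(G)$, equals the minimum over all these finitely many programs of their optimal values. Indeed, every feasible point of any such $L$ is by construction a ChNZF, respectively MNZF, with parameter $r$, so each optimal value is at least the flow number; conversely, an optimal flow lies in the orthant prescribed by its own sign pattern and, in the Chebyshev case, one may take $j(e)$ to be a coordinate attaining the maximum $\|\varphi(e)\|_\infty$, so the flow is feasible for the corresponding $L$ with the same $r$. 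To see that a relevant $L$ has a finite, attained, and rational optimum, note first that the flow number is finite: placing a nowhere-zero $6$-flow of $G$ (which exists since $G$ is bridgeless) in the first coordinate and $0$ in the others yields a $(6,d)$-MNZF and a $(6,d)$-ChNZF, so at least one of the programs is feasible with value at most $6$; moreover the constraints $1\le\|\varphi(e)\|\le r-1$ force $r\ge 2$, so $L$ is bounded below. A feasible, bounded-below linear program over rational data attains its optimum at a rational point; equivalently, the projection onto the $r$-axis of the rational polyhedron cut out by $L$ is a rational polyhedron in $\mathbb{R}$, hence a rational interval or ray whose infimum is rational. Taking the minimum over finitely many rational numbers finishes the argument.

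The step I expect to be the main obstacle is more bookkeeping than depth: it is the disjunctive nature of the lower norm bound, and in particular, for the Chebyshev norm, handling the ``which coordinate is largest'' choice while verifying cleanly that the minimum over the programs really coincides with the flow number, since both inequalities must be checked and one must be mildly careful when an optimal flow lies on the boundary of several orthants, where any consistent choice of signs works. The same linear-programming description is moreover exactly what is needed for the normal-form statements promised in this section: an optimum is attained at a vertex of a rational polyhedron whose defining matrix has entries in $\{-1,0,1\}$ together with the coefficients of $r$, so Cramer's rule bounds the denominators of the flow values and shows that they can be chosen as integer multiples of $1/N$ for an explicit $N$ depending only on $|E(G)|$ and $d$.
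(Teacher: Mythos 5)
Your proposal is correct and follows essentially the same route as the paper: a finite case split by sign patterns (and, for the Chebyshev norm, an extra combinatorial choice per edge) reduces each case to a linear program with integer data, bounded and feasible by Seymour's $6$-flow theorem, whose rational optimum then yields rationality of the flow number as a minimum of finitely many rationals. Your handling of the Chebyshev lower bound via a witness coordinate with $\sigma_{e,j(e)}\varphi_{j(e)}(e)\ge 1$ is a slightly leaner bookkeeping choice than the paper's subdivision by which coordinate is maximal, but the argument is the same.
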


\begin{proof}
We first prove the rationality of $\Phi_d^1(G)$. The idea is to divide the space of flows into finitely many regions such that in each region, the inequalities $1\leq \|\varphi(e)\|_1\leq r-1$ are linear (in $\varphi$) for all edges $e$.
In order to emphasize that flows should be thought of as vectors in this context, we identify functions $\varphi\colon E\to\mathbb{R}^d$ with elements $x\in \mathbb{R}^{d|E|}$ and use the notation $x_{e,i}$ to refer to the $i$-th coordinate of $\varphi(e)$.
Now for each vector $\sigma\in \{-1,1\}^{d|E|}$,  let $M_{\sigma}$ be the set of all elements $(x,r)\in \mathbb{R}^{d|E|}\times \mathbb{R}$ satisfying the following conditions:
\begin{align*}
\sum_{e\in\partial^+(v)}x_{e,i}=\sum_{e\in\partial^-(v)}x_{e,i} &\qquad \text{ for all }v\in V, i\in\{1,\dots,d\}\\
\sigma_{e,i}\cdot x_{e,i}\geq 0 &\qquad \text{ for all }e\in E,i\in\{1,\dots,d\}\\
1\leq \sum_{i=1}^d \sigma_{e,i}\cdot x_{e,i}\leq r-1 &\qquad \text{ for all }e\in E
\end{align*}
Here, $\sigma$ represents the vector of signs of $x$. Note that since $\sigma$ is constant, these (in-)equalities are all linear in $x$ and $r$ and the coefficients are integers. Next, define
 \[M= \bigcup_{\sigma\in \{-1,1\}^{d|E|}}M_{\sigma}\]
and let $\pi\colon M\to \mathbb{R}$ be the projection onto the $r$-coordinate.
For a fixed value of $r$, the set $\pi^{-1}(r)$ is clearly in bijection to the set of all $(r,d)$-MNZF on $G$ via the identification above.
Thus, it holds that \[\Phi_d^1(G)=\min\,\pi(M)=\min_{\sigma\in \{-1,1\}^{d|E|}}\min\,\pi(M_{\sigma}).\]
Now the key is that $\min\,\pi(M_{\sigma})$ can be regarded as the optimum of a linear program with integral coefficients that is bounded due to Seymour's $6$-flow theorem and is thus rational. But this means that $\Phi_d^1(G)$ is the minimum of finitely many rational values and thus rational as well. 

The argument for the rationality of $\Phi_d^{\infty}(G)$ is very similar. In order to linearize the Chebyshev norm, one needs to subdivide the space further such that in each region, a fixed coordinate is maximal for all points in the region. Since these are still finitely many regions along with integral constraints for each region, the same conclusion as above holds.
\end{proof}

\begin{proposition}
	Consider positive integers $d$, $p$, $q$ and a bridgeless graph $G$ with $\Phi_d^\infty(G)=p/q$. Then there exists a $(p/q, d)$-ChNZF of $G$ such that for each edge, its flow value $\varphi(e)$ can be written as $(k_1/q, \dots, k_d/q)$, where $k_1, \dots, k_d$ are integers. 
    \label{prop:chebyshev_normal_form}
\end{proposition}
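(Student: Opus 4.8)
The plan is to leverage the linear-programming structure already set up in the proof of Theorem~\ref{th:rationality}. Recall that there we wrote $\Phi_d^\infty(G) = \min_\sigma \min \pi(M_\sigma)$, where, to handle the Chebyshev norm, each region $M_\sigma$ must be further subdivided according to which coordinate attains the maximum on each edge. So fix a sign pattern $\sigma \in \{-1,1\}^{d|E|}$ together with a choice function $m\colon E \to \{1,\dots,d\}$ selecting the maximizing coordinate on each edge; this yields a polyhedron $P = P_{\sigma,m}$ in $\mathbb{R}^{d|E|} \times \mathbb{R}$ defined entirely by linear inequalities and equalities with integer coefficients (the flow conservation equations, the sign constraints $\sigma_{e,i} x_{e,i} \ge 0$, the maximality constraints $\sigma_{e,i} x_{e,i} \le \sigma_{e,m(e)} x_{e,m(e)}$ for all $i$, and the norm bounds $1 \le \sigma_{e,m(e)} x_{e,m(e)} \le r-1$). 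By Theorem~\ref{th:rationality} and its proof, $\Phi_d^\infty(G) = p/q$ is the optimal value $\min \pi(P)$ for at least one such pair $(\sigma,m)$.

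Next I would invoke the standard fact from the theory of linear programming that a rational linear program whose optimum is attained has an optimal solution at a vertex of the feasible polyhedron, and that every vertex of a rational polyhedron $\{x : Ax \le b\}$ with $A, b$ integral has coordinates that are rational with a common denominator controlled by subdeterminants of $A$. More precisely, a vertex is the unique solution of some subsystem $A' x = b'$ with $A'$ square and nonsingular, so by Cramer's rule each coordinate has denominator dividing $\det(A')$. Applying this to the polyhedron $P$ above with the objective $r$: there is an optimal vertex $(x^*, r^*)$ with $r^* = \Phi_d^\infty(G) = p/q$ and with every coordinate $x^*_{e,i}$ rational. The flow $\varphi$ read off from $x^*$ is then a $(p/q,d)$-ChNZF all of whose coordinates are rational. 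It remains only to argue that the common denominator can be taken to be exactly $q$ (or a divisor of it).

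For that last point, the clean argument is: let $Q$ be a common denominator of all the coordinates of the optimal vertex $(x^*, r^*)$, so in particular $Q r^* = Q p/q$ is an integer, whence $q \mid Q$. Now scale: consider $\varphi'(e) := \varphi(e)$ unchanged — we cannot rescale a fixed flow without changing $r$ — so instead I would argue directly that the denominator \emph{is} $q$ by a more careful choice of the subsystem, or simply note that writing $Q = q \cdot q'$ and each coordinate as $k_{e,i}/Q$, the claim as stated (coordinates of the form $k_i/q$) may in fact require us to pass to the least common denominator and observe it divides $q$; this is where I expect the genuine content to lie. The cleanest route is probably: among all optimal $(p/q,d)$-ChNZFs, the vertex argument produces one whose entries lie in $\frac{1}{q}\mathbb{Z}$ because the $r$-constraint $\sigma_{e,m(e)} x_{e,m(e)} = r - 1 = p/q - 1$ is tight at the optimal vertex for the edges realizing the maximum norm (otherwise $r$ could be decreased), pinning those coordinates into $\frac{1}{q}\mathbb{Z}$, and then flow conservation plus the maximality constraints propagate this denominator to the rest; the main obstacle is verifying that this propagation does not introduce new primes in the denominator, which I would handle by choosing the tight subsystem defining the vertex to include the equation $\sigma_{e,m(e)} x_{e,m(e)} = p/q - 1$ on a spanning set of edges and checking the resulting integer matrix has the right unimodularity-type property relative to $q$. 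If that fails in general, the fallback is to state the proposition with denominator a multiple of $q$, but I expect the sharper version to go through.
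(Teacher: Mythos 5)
Your proposal reaches ``all coordinates rational'' but stops short of the actual claim, and the missing step is exactly where the content of the proposition lies. The vertex/Cramer argument only shows that an optimal vertex of your polyhedron $P_{\sigma,m}$ has coordinates whose denominators divide some subdeterminant of the integral constraint matrix; there is no a priori relation between that subdeterminant and $q$, the denominator of the optimal value $p/q$. Your suggested repair --- putting the tight constraints $\sigma_{e,m(e)}x_{e,m(e)}=p/q-1$ on ``a spanning set of edges'' into the defining subsystem and hoping for a unimodularity-type property --- is asserted, not verified, and it cannot be taken for granted: the paper's own example of $K_4$ shows that for the Manhattan norm, whose linearization has exactly the same flavour (the norm row simply has $d$ nonzero entries instead of one), the best achievable denominator is $2q$ rather than $q$. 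So whether the denominators collapse to $q$ genuinely depends on the shape of the norm constraints and has to be proved; as written, your argument establishes only the weaker statement with an uncontrolled common denominator.

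The paper avoids the LP machinery entirely and argues combinatorially: among all $(p/q,d)$-ChNZFs choose one minimizing the number of coordinates not lying in $\tfrac{1}{q}\mathbb{Z}$. For a fixed coordinate $i$, conservation of the $i$-th component forces the set of edges whose $i$-th entry is not a multiple of $1/q$ to have no vertex of degree one, hence to contain a circuit $C$. Pushing the amount $\alpha$ (the minimum distance from these entries to the threshold values $\pm1$ and $\pm(p/q-1)$, all of which lie in $\tfrac{1}{q}\mathbb{Z}$) around $C$ in coordinate $i$ keeps every Chebyshev constraint valid, lands at least one entry on a multiple of $1/q$, and disturbs no entry that was already good --- contradicting minimality. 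This works precisely because the Chebyshev constraints are coordinatewise interval constraints with endpoints in $\tfrac{1}{q}\mathbb{Z}$, which is the structural fact your LP route would also ultimately have to exploit (e.g.\ via a total unimodularity argument for the selected subsystem). That extra verification is more work than the direct cycle-pushing proof, so I would either supply it or switch to the combinatorial argument.
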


\begin{proof}
For the sake of contradiction, assume that there exists a $(p/q, d)$-ChNZF on $G$ in which the total number of coordinates not of the form $k/q$, summed over all edges, is minimal but non-zero.

Now, consider an edge $e$ whose $i^{\text{th}}$ coordinate cannot be written in the required form. Observe that for any vertex, there cannot be exactly one incident edge whose $i^{\text{th}}$ coordinate is not a multiple of $1/q$; otherwise, the flow conservation constraint at that vertex would fail to yield zero in the $i^{\text{th}}$ coordinate.

It follows that we can construct a circuit $C$ containing $e$ such that the $i^{\text{th}}$ coordinate of no edge in $C$ is a multiple of $1/q$. Let $\alpha > 0$ denote the minimum distance between the $i^{\text{th}}$ coordinates of edges in $C$ and the values $\pm 1$, $\pm(p/q - 1)$.

By adding or subtracting $\alpha$ from the $i^{\text{th}}$ coordinate of the flow values along the entire circuit $C$, we reduce the total number of conflicting coordinates, contradicting the assumption of minimality. This completes the proof.

\end{proof}

It is well known that the Manhattan and Chebyshev norms are dual to each other. This suggests a potential relationship between Manhattan flows and Chebyshev flows. In the two-dimensional case, this relationship is particularly strong.

\begin{proposition}
    A bridgeless graph has an $(r, 2)$-MNZF if and only if that graph has an $(r, 2)$-ChNZF. \label{prop:manhattan_chebyshev_equivalence}
\end{proposition}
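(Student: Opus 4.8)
The plan is to exhibit an explicit linear map on $\mathbb{R}^2$ that converts $\|\cdot\|_1$-constraints into $\|\cdot\|_\infty$-constraints and vice versa, and observe that it preserves the flow-conservation property. The key geometric fact is that in two dimensions the unit ball of $\|\cdot\|_\infty$ is a square and the unit ball of $\|\cdot\|_1$ is also a square, namely the first rotated by $45^\circ$ and scaled. Concretely, consider the linear map $T\colon \mathbb{R}^2\to\mathbb{R}^2$ given by $T(x,y)=(x+y,\ x-y)$ (the rotation-and-scaling by a factor $\sqrt 2$). A direct check shows that $\|T(x,y)\|_\infty=\max(|x+y|,|x-y|)=|x|+|y|=\|(x,y)\|_1$, and symmetrically $\|T^{-1}(u,v)\|_1=\|(u,v)\|_\infty$ since $T^{-1}(u,v)=\tfrac12(u+v,\ u-v)$ and $\tfrac12(|u+v|+|u-v|)=\max(|u|,|v|)$. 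Thus $T$ sends the annulus $\{1\le\|\boldsymbol z\|_1\le r-1\}$ exactly onto the annulus $\{1\le\|\boldsymbol w\|_\infty\le r-1\}$, and its inverse does the reverse.

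The steps are then: (1) Suppose $\varphi$ is an $(r,2)$-MNZF on $G$ with some orientation. Define $\psi(e):=T(\varphi(e))$ for each edge $e$. (2) Verify that $\psi$ is still a flow: since $T$ is linear, for every vertex $v$ we have $\sum_{e\in\partial^+(v)}\psi(e)-\sum_{e\in\partial^-(v)}\psi(e)=T\bigl(\sum_{e\in\partial^+(v)}\varphi(e)-\sum_{e\in\partial^-(v)}\varphi(e)\bigr)=T(\boldsymbol 0)=\boldsymbol 0$, using the same orientation. (3) Verify the norm bounds: by the identity above, $\|\psi(e)\|_\infty=\|T(\varphi(e))\|_\infty=\|\varphi(e)\|_1\in[1,r-1]$, so $\psi$ is an $(r,2)$-ChNZF on $G$. (4) For the converse, given an $(r,2)$-ChNZF $\psi$, set $\varphi(e):=T^{-1}(\psi(e))$ and argue symmetrically, using $\|T^{-1}(\boldsymbol w)\|_1=\|\boldsymbol w\|_\infty$ and linearity of $T^{-1}$.

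I do not anticipate a serious obstacle here; the only thing to be careful about is making sure the orientation is kept fixed throughout (so that "flow" is literally preserved coordinatewise before applying $T$), and checking the two norm identities cleanly — the second can be derived from the first by substituting $\boldsymbol w=T(\boldsymbol z)$, so really only one computation is needed. It may also be worth remarking that the whole argument is specific to $d=2$: it uses that the $\ell_1$ and $\ell_\infty$ unit balls in the plane are linearly equivalent, which fails in higher dimensions (e.g.\ the octahedron and the cube in $\mathbb{R}^3$ are not linearly isomorphic), so this cannot be extended beyond two dimensions by the same method.
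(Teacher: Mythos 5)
Your proof is correct and follows essentially the same route as the paper: both apply the linear map $(x,y)\mapsto(x\pm y,\, x\mp y)$ (a spiral similarity with ratio $\sqrt2$) to carry the Manhattan annulus onto the Chebyshev annulus, and use linearity to preserve flow conservation. Your write-up is if anything slightly more explicit, since you verify the norm identity $\max(|x+y|,|x-y|)=|x|+|y|$ algebraically where the paper argues geometrically from the similarity of the two annuli.
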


\begin{proof}
    Consider annuli between circles of radii $1$ and $r-1$, assuming Manhattan and Chebyshev norm, separately (see Figure~\ref{fig:manhattan_chebyshev_equivalence}). These two annuli are similar with the similarity coefficient equal to $\sqrt2$ (as a ratio of the diagonal length to the side length).
    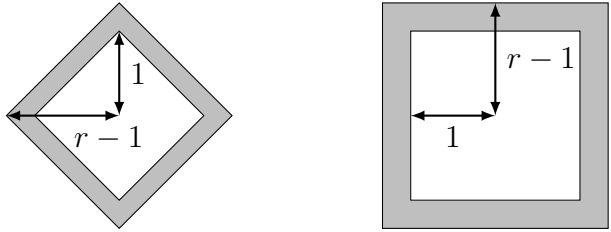
\begin{figure}[ht]
    \centering\begin{tikzpicture}[auto]

    \filldraw[fill=lightgray]
    (-4.5, 0) -- (-3, 1.5) -- (-1.5, 0) -- (-3, -1.5) -- cycle;
    \filldraw[fill=white]
    (-4.125, 0) -- (-3, 1.125) -- (-1.875, 0) -- (-3, -1.125) -- cycle;

    \filldraw[fill=lightgray]
    (0.5, 1.5) -- (3.5, 1.5) -- (3.5, -1.5) -- (0.5, -1.5) -- cycle;
    \filldraw[fill=white]
    (0.875, 1.125) -- (3.125, 1.125) -- (3.125, -1.125) -- (0.875, -1.125) -- cycle;


    \path[latex-latex, thick]
    (-3, 0) edge node [below right]{$r-1$} (-4.5, 0)
    (-3, 0) edge node[right] {$1$} (-3, 1.125)
    (2, 0) edge node {$1$} (0.875, 0)
    (2, 0) edge node[right] {$r-1$} (2, 1.5);
    
\end{tikzpicture}
    \caption{The annulus in the Manhattan norm (left) with respect to the annulus in the Chebyshev norm (right)}
    \label{fig:manhattan_chebyshev_equivalence}
    \end{figure}
Therefore, we can transform the flow vectors in the Manhattan flow into corresponding flow vectors in the Chebyshev flow, and vice versa, using a spiral similarity. More precisely, the equations for the linear transformation under consideration are as follows:
    \[
\begin{bmatrix}
x' \\
y'
\end{bmatrix}
=
\begin{bmatrix}
1 & -1 \\
1 & 1
\end{bmatrix}
\begin{bmatrix}
x \\
y
\end{bmatrix}
=
\begin{bmatrix}
x - y \\
x + y
\end{bmatrix}
\]
    
Since the transformation is linear, it is clear that the flow conservation constraints remain satisfied.
\end{proof}

\begin{corollary}\label{cor:manhequalcheb}
    For any bridgeless graph $G$, $\Phi^1_2(G)=\Phi^\infty_2(G)$.
\end{corollary}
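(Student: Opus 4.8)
The plan is to obtain Corollary~\ref{cor:manhequalcheb} as an immediate consequence of Proposition~\ref{prop:manhattan_chebyshev_equivalence}. Recall that by the remark following the definitions (and as in the Euclidean case, cf.\ \cite[p.~2]{complex_flows}), the infima defining $\Phi^1_2(G)$ and $\Phi^\infty_2(G)$ are attained, so there is some optimal $r_1 := \Phi^1_2(G)$ admitting an $(r_1, 2)$-MNZF and some optimal $r_\infty := \Phi^\infty_2(G)$ admitting an $(r_\infty, 2)$-ChNZF.

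First I would apply Proposition~\ref{prop:manhattan_chebyshev_equivalence} with $r = r_1$: since $G$ has an $(r_1, 2)$-MNZF, it also has an $(r_1, 2)$-ChNZF, and hence $\Phi^\infty_2(G) \le r_1 = \Phi^1_2(G)$. Symmetrically, applying the proposition with $r = r_\infty$ in the other direction, $G$ has an $(r_\infty, 2)$-ChNZF and therefore an $(r_\infty, 2)$-MNZF, giving $\Phi^1_2(G) \le r_\infty = \Phi^\infty_2(G)$. Combining the two inequalities yields $\Phi^1_2(G) = \Phi^\infty_2(G)$.

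There is essentially no obstacle here: the entire content sits in Proposition~\ref{prop:manhattan_chebyshev_equivalence}, whose biconditional form makes both directions available for the same value of $r$. The only point worth stating cleanly is that one needs the biconditional (not just one implication) together with the fact that the infimum is a minimum; then the two one-line inequalities above close the argument. One could even avoid invoking attainment of the infimum and simply note that Proposition~\ref{prop:manhattan_chebyshev_equivalence} shows the two sets $\{r \mid \exists\,(r,2)\text{-MNZF on }G\}$ and $\{r \mid \exists\,(r,2)\text{-ChNZF on }G\}$ coincide, whence their infima are equal; I would present it in whichever of these two equivalent forms reads most smoothly.
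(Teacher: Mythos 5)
Your proposal is correct and matches the paper exactly: the paper states this as an immediate corollary of Proposition~\ref{prop:manhattan_chebyshev_equivalence}, using precisely the two-directional application of the biconditional (equivalently, the observation that the two sets of admissible $r$ coincide) that you describe. Nothing is missing.
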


Although Proposition \ref{prop:manhattan_chebyshev_equivalence} establishes the equivalence of Chebyshev and Manhattan flows in two dimensions, we are not aware of any argument that extends this result to higher dimensions. The reasoning used in the two-dimensional case does not generalize, as in three dimensions, Manhattan and Chebyshev spheres are fundamentally different and cannot be transformed into one another via a linear transformation. Specifically, while the Manhattan sphere corresponds to an octahedron, the Chebyshev sphere is a cube.

Using Proposition \ref{prop:manhattan_chebyshev_equivalence}, we obtain a similar (though weaker) normal form for two-dimensional flows in the Manhattan norm.
\begin{proposition}
	Consider positive integers $p, q$ and a bridgeless graph $G$ with $\Phi_2^1(G)=p/q$. Then there exists a $(p/q, 2)$-MNZF on $G$ such that for each edge, its flow value $\varphi(e)$ can be written as $(k_1/(2q), k_2/(2q))$ for some integers $k_1, k_2$.
\end{proposition}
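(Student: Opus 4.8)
The plan is to transport the Chebyshev normal form from Proposition~\ref{prop:chebyshev_normal_form} through the linear transformation used in the proof of Proposition~\ref{prop:manhattan_chebyshev_equivalence}. By Corollary~\ref{cor:manhequalcheb} we have $\Phi_2^\infty(G) = \Phi_2^1(G) = p/q$, so Proposition~\ref{prop:chebyshev_normal_form} gives a $(p/q, 2)$-ChNZF $\psi$ on $G$ in which every flow value $\psi(e)$ has the form $(k_1/q, k_2/q)$ with $k_1, k_2 \in \mathbb{Z}$. The transformation in Proposition~\ref{prop:manhattan_chebyshev_equivalence} sends a Chebyshev flow to a Manhattan flow (or vice versa) via the matrix $\begin{bmatrix} 1 & -1 \\ 1 & 1 \end{bmatrix}$ or its inverse; I would apply the appropriate direction so that the image $\varphi$ is the desired $(p/q, 2)$-MNZF.

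The key computational step is to check what the transformation does to the denominators. Going from Chebyshev to Manhattan, one direction multiplies by the matrix $\begin{bmatrix} 1 & -1 \\ 1 & 1\end{bmatrix}$, which maps a Chebyshev ball of radius $\rho$ to a Manhattan ball of radius... here I need to be careful about which way scales the radii. The annuli are similar with ratio $\sqrt{2}$, so one of the two maps $\begin{bmatrix} 1 & -1 \\ 1 & 1\end{bmatrix}$ and $\tfrac12\begin{bmatrix} 1 & 1 \\ -1 & 1\end{bmatrix}$ sends the Chebyshev annulus (radii $1$ and $p/q - 1$) exactly to the Manhattan annulus with the same radii. Since $\begin{bmatrix} 1 & -1 \\ 1 & 1\end{bmatrix}$ has integer entries, it maps $(k_1/q, k_2/q)$ to $((k_1 - k_2)/q, (k_1 + k_2)/q)$, keeping denominator $q$; the inverse $\tfrac12\begin{bmatrix} 1 & 1 \\ -1 & 1\end{bmatrix}$ maps $(k_1/q, k_2/q)$ to $((k_1 + k_2)/(2q), (-k_1 + k_2)/(2q))$, introducing the factor $2$ in the denominator. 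The weaker $1/(2q)$ conclusion in the statement strongly suggests that it is this inverse map that carries a Chebyshev flow to a Manhattan flow, so I would verify that $\tfrac12\begin{bmatrix} 1 & 1 \\ -1 & 1\end{bmatrix}$ is the correct direction by checking it maps the Chebyshev unit ball ($\max(|x|,|y|) \le 1$, a square with vertices $(\pm 1, \pm 1)$) onto the Manhattan ball of radius $1$ ($|x| + |y| \le 1$, a square with vertices $(\pm 1, 0), (0, \pm 1)$) — indeed the four vertices $(\pm1,\pm1)$ go to $(\pm1,0)$ and $(0,\pm1)$, confirming this. Consequently $\varphi(e) := \tfrac12(\psi(e)_1 + \psi(e)_2,\, -\psi(e)_1 + \psi(e)_2)$ has both coordinates of the form $k/(2q)$ and satisfies $1 \le \|\varphi(e)\|_1 \le p/q - 1$.

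Finally, linearity of the transformation preserves the flow-conservation constraints at every vertex (for each fixed orientation), exactly as noted in the proof of Proposition~\ref{prop:manhattan_chebyshev_equivalence}, so $\varphi$ is a genuine $(p/q, 2)$-MNZF on $G$. The main obstacle, and really the only subtlety, is fixing the correct direction of the similarity and the precise scaling constant so that the radii $1$ and $p/q - 1$ are preserved and the denominators come out as $2q$ rather than $q$ or $4q$; once that is pinned down, the rest is immediate from the two quoted results. I would present this as a short paragraph: invoke Corollary~\ref{cor:manhequalcheb} and Proposition~\ref{prop:chebyshev_normal_form}, apply the explicit linear map, track the denominators, and note conservation is automatic.
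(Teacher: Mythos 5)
Your proposal is correct and follows essentially the same route as the paper's own proof: obtain the Chebyshev normal form via Proposition~\ref{prop:chebyshev_normal_form} and push it through the linear similarity (with entries $\pm 1/2$) onto the Manhattan ball, which introduces the factor $2$ in the denominator. The only cosmetic difference is your choice of $\tfrac12\bigl(\begin{smallmatrix}1 & 1\\ -1 & 1\end{smallmatrix}\bigr)$ versus the paper's $\tfrac12\bigl(\begin{smallmatrix}1 & -1\\ 1 & 1\end{smallmatrix}\bigr)$; both map the Chebyshev unit ball onto the Manhattan unit ball, so either works.
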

\begin{proof}
Let $\varphi$ be a $(p/q,2)$-MNZF on $G$. By Proposition \ref{prop:manhattan_chebyshev_equivalence}, $G$ has a $(p/q,2)$-ChNZF. Applying Proposition \ref{prop:chebyshev_normal_form}, we obtain a $(p/q,2)$-ChNZF $\varphi'$ with values of the form $(k_1/q,k_2/q)$.
Note that the linear map $\lambda$ from the unit ball in the Chebyshev norm to the unit ball in the Manhattan norm is given by

 \[
\begin{bmatrix}
x' \\
y'
\end{bmatrix}
=
\begin{bmatrix}
1/2 & -1/2 \\
1/2 & 1/2
\end{bmatrix}
\begin{bmatrix}
x \\
y
\end{bmatrix}
=
\begin{bmatrix}
\frac12(x - y) \\

\frac12(x + y)
\end{bmatrix}
\].

Thus, the composition $\lambda\circ \varphi'$ is a $(p/q,2)$-MNZF with values of the form $(k_1/(2q),k_2/(2q))$.
\end{proof}

As already remarked, such a result provides a normal form for the Manhattan flows only in two dimensions. Moreover, in contrast to the analogous result for the Chebyshev norm, it is generally not sufficient to take only multiples of $1/q$ in the previous proposition. For instance, we have $\Phi_2^1(K_4)=2$, but there does not exist a $(2,2)$-MNZF on $K_4$ with only integral entries. Such a flow would be restricted to the vectors $(\pm 1,0) $ and $(0,\pm 1)$, but no three of these sum to zero.

\section{General upper bounds by dimension}\label{sec:upperbounds}

Seymour proved that each bridgeless graph admits a nowhere-zero $6$-flow. More precisely, he obtained his main result by proving the existence of a pair of flows with some nice properties.

\begin{lemma} \emph{\cite[p. 132]{seymour}}
    For any bridgeless graph, there exist a $2$-flow $\varphi_2$ and a $3$-flow $\varphi_3$ such that for each edge $e$, at least one value from $\varphi_2(e), \varphi_3(e)$ is non-zero.\label{lem:2_flow_3_flow_seymour}
\end{lemma}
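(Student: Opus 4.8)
The plan is to reprove this the way Seymour did, since the existence of such a pair is really the combinatorial heart of his $6$-flow theorem rather than a consequence of it.

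The first, purely formal, step is to note that a pair $(\varphi_2,\varphi_3)$ as in the statement is the same datum as a nowhere-zero $\mathbb{Z}_2\times\mathbb{Z}_3$-flow: reducing coordinatewise gives group-valued flows $a=\varphi_2\bmod 2$ and $b=\varphi_3\bmod 3$ with $(a(e),b(e))\neq(0,0)$ for every edge, and since $\mathbb{Z}_2\times\mathbb{Z}_3\cong\mathbb{Z}_6$ this is precisely a nowhere-zero $6$-flow. Conversely, a nowhere-zero $\mathbb{Z}_2\times\mathbb{Z}_3$-flow yields the integral pair by standard lifts: the support of the $\mathbb{Z}_2$-coordinate is an even subgraph, an edge-disjoint union of circuits, and placing a consistently oriented $\pm1$ on each circuit (and $0$ elsewhere) gives a $2$-flow $\varphi_2$ with that support; the support $H$ of the $\mathbb{Z}_3$-coordinate carries a nowhere-zero $\mathbb{Z}_3$-flow, hence also a nowhere-zero $3$-flow, which extended by $0$ outside $H$ is a $3$-flow $\varphi_3$ with support $H$. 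So it suffices to construct the $\mathbb{Z}_2\times\mathbb{Z}_3$-flow, and I would do this directly rather than invoke the $6$-flow theorem.

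After the usual reductions --- suppressing degree-$2$ vertices, handling blocks separately, and splitting a $2$-edge-cut by inserting, on each side, an edge joining its two ends and afterwards gluing the flows on the two smaller bridgeless graphs --- we may assume $G$ is $3$-edge-connected. Fix an ear decomposition $C_0,P_1,\dots,P_t$ of $G$ with $C_0$ a circuit and $G_t=G$, and write $G_i=C_0\cup P_1\cup\dots\cup P_i$. I would construct, by induction on $i$, a $\mathbb{Z}_3$-flow $\varphi_3^{(i)}$ on $G_i$ together with an even subgraph $H_i\subseteq G_i$ such that every edge of $G_i$ lies in $\operatorname{supp}(\varphi_3^{(i)})$ or in $E(H_i)$; the base case is $\varphi_3^{(0)}=0$ and $H_0=C_0$. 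When adding the ear $P=P_{i+1}$ with ends $u,v$, the idea is to assign one fixed nonzero value of $\mathbb{Z}_3$ to every edge of $P$ and to restore flow conservation by adding a compensating $u$--$v$ flow inside $G_i$; this puts all of $P$ into the new support. Some edges carrying the compensation may drop to $0$, but one simultaneously replaces $H_i$ by its symmetric difference with the circuit that closes the ear, which is again even and absorbs those edges, so the invariant is maintained.

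The one genuinely delicate point --- and, I expect, the main obstacle --- is this last update: an edge that already lay in $E(H_i)$ and is now forced to value $0$ would drop out of both supports unless the compensating flow is routed through $G_i$ so as to avoid zeroing such an edge. Guaranteeing that such a routing and value always exist is where $3$-edge-connectivity is used --- it supplies several internally edge-disjoint $u$--$v$ paths, which together with the two available nonzero $\mathbb{Z}_3$-values give enough room to steer the compensation past the conflicting edges --- and checking this in every configuration is the technical core of Seymour's argument. Once the induction reaches $G_t=G$, every edge of $G$ lies in $\operatorname{supp}(\varphi_3^{(t)})$ or in $E(H_t)$, and translating back through the first paragraph delivers the required $2$-flow and $3$-flow.
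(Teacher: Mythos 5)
First, a point of reference: the paper does not prove this lemma at all --- it is imported verbatim from Seymour's paper and used as a black box --- so your proposal has to be measured against Seymour's original argument rather than anything in the text. Your first two paragraphs are sound: the translation between an integral $2$-flow/$3$-flow pair with no common zero and a nowhere-zero $\mathbb{Z}_2\times\mathbb{Z}_3$-flow is the standard Tutte lifting argument, and the reduction to $3$-edge-connected graphs is routine. The gap is in the inductive core, and it is a real one that you flag but do not close. Concretely: the compensating $u$--$v$ flow must be routed inside $G_i$, and the intermediate graphs of an ear decomposition are in general only $2$-edge-connected even when $G$ itself is $3$-edge-connected, so the ``several internally edge-disjoint $u$--$v$ paths'' you invoke are not available where you need them. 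Worse, for any single choice of compensating path $W$ the entire adjustment is governed by one value $c\in\{1,2\}\subseteq\mathbb{Z}_3$ (the value placed on the ear), while every edge of $W$ that is toggled \emph{out} of the even subgraph $H_i$ by the symmetric difference imposes its own constraint forbidding one value of $c$; as soon as two such edges forbid different values, that path is unusable, and nothing guarantees that some path survives. A scheme in which the number of constraints grows with the length of the compensating path while the number of degrees of freedom stays fixed at two cannot be rescued by ``steering''; this is exactly why the naive forward induction does not prove the $6$-flow theorem.

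Seymour's actual argument is organised to avoid this trap rather than to confront it. He does not use a plain ear decomposition with forward compensation: the graph is built up from edge sets $F_1,\dots,F_n$ chosen with a parity and minimality condition (so that each $F_i$, after contracting the part already built, closes up into circuits in a controlled way), and the $\mathbb{Z}_3$-part of the flow is then assembled by a \emph{descending} induction. The correction made at stage $i$ is supported on circuits inside $H_i$, hence cannot disturb the pieces $F_{i+1},\dots,F_n$ that have already been made nonzero, and it only has to satisfy the constraints coming from the single piece $F_i$; no edge ever needs to be protected against later rerouting. That choice of decomposition and the reversal of the induction are the content of the lemma. Your sketch replaces them with the sentence that ``checking this in every configuration is the technical core of Seymour's argument,'' which is an accurate diagnosis of where the difficulty lies, but it leaves precisely that difficulty unresolved; as written, the proposal is a description of why the statement is hard, not a proof of it.
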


We use the previous lemma to prove an analogous upper bound for the two-dimensional Chebyshev and Manhattan flow number.

\begin{proposition}
    For each bridgeless graph $G$, $\Phi^1_2(G)=\Phi^\infty_2(G)\leq 3$.\label{prop:manhattan_upper_seymour}
\end{proposition}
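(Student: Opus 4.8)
The plan is to reduce to the Chebyshev norm via Corollary~\ref{cor:manhequalcheb} and then glue the two flows of Lemma~\ref{lem:2_flow_3_flow_seymour} into the two coordinates of an $\mathbb{R}^2$-flow. Since the equality $\Phi^1_2(G)=\Phi^\infty_2(G)$ is already given by Corollary~\ref{cor:manhequalcheb}, it suffices to exhibit a $(3,2)$-ChNZF on $G$, that is, an $\mathbb{R}^2$-flow $\varphi$ with $1\le\|\varphi(e)\|_\infty\le 2$ for every edge $e$.

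Concretely, I would apply Lemma~\ref{lem:2_flow_3_flow_seymour} to obtain a $2$-flow $\varphi_2$ and a $3$-flow $\varphi_3$ (with respect to a common orientation of $G$) such that $(\varphi_2(e),\varphi_3(e))\neq(0,0)$ for every edge $e$, and set $\varphi(e):=(\varphi_2(e),\varphi_3(e))\in\mathbb{R}^2$. Flow conservation holds coordinatewise, so $\varphi$ is an $\mathbb{R}^2$-flow. For each edge we have $\|\varphi(e)\|_\infty=\max\{|\varphi_2(e)|,|\varphi_3(e)|\}$; the bounds $|\varphi_2(e)|\le 1$ and $|\varphi_3(e)|\le 2$ give $\|\varphi(e)\|_\infty\le 2$, and since $\varphi_2(e)$ and $\varphi_3(e)$ are integers that are not both zero, at least one of them has modulus at least $1$, so $\|\varphi(e)\|_\infty\ge 1$. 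Thus $\varphi$ is a $(3,2)$-ChNZF, whence $\Phi^\infty_2(G)\le 3$, and the claimed equality and bound follow.

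No step here is computationally heavy; the only subtlety --- and the reason the result is naturally phrased through the Chebyshev norm --- is that the same coordinatewise gluing does \emph{not} work directly for $\|\cdot\|_1$: if $|\varphi_2(e)|=1$ and $|\varphi_3(e)|=2$ occur on the same edge, then $\|(\varphi_2(e),\varphi_3(e))\|_1=3>2$, and one can check that no linear substitution on the pair $(\varphi_2(e),\varphi_3(e))$ sends all of the possible values $(0,1),(0,2),(1,0),(1,1),(1,2)$ (up to sign) into the $\ell_1$-annulus of outer radius $2$. So the ``hard part'' is really the observation that the cube-shaped Chebyshev annulus of radii $1$ and $2$ exactly accommodates the coordinate ranges $\{0,\pm1\}$ and $\{0,\pm1,\pm2\}$ of the two Seymour flows, after which Corollary~\ref{cor:manhequalcheb} transports the conclusion back to the Manhattan setting.
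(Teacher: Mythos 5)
Your proposal is correct and follows essentially the same route as the paper: pair the $2$-flow and $3$-flow from Lemma~\ref{lem:2_flow_3_flow_seymour} coordinatewise to obtain a $(3,2)$-ChNZF, then invoke the Manhattan--Chebyshev equivalence to transfer the bound. Your closing remark about why the same gluing fails for the $\ell_1$-norm matches the paper's own comment following the proposition.
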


\begin{proof}
    Let $\varphi_2$ and $\varphi_3$ be a $2$-flow and a $3$-flow from Lemma \ref{lem:2_flow_3_flow_seymour}. Clearly, the flow assigning the vector $(\varphi_2(e), \varphi_3(e))$ to each edge $e \in E(G)$ is a $(3, 2)$-ChNZF. Proposition \ref{prop:manhattan_chebyshev_equivalence} yields then $\Phi^1_2(G)=\Phi^\infty_2(G)\leq 3$.
\end{proof}

It is worth noting that proving a similar result for Manhattan flows without relying on Proposition \ref{prop:manhattan_chebyshev_equivalence} may be less intuitive.

For Euclidean nowhere-zero flows, it has been conjectured that three dimensions are enough to reduce the flow number to its minimal possible value $2$ as mentioned in the Conjecture \ref{conj:3d_flows}. 
In the case of Chebyshev flows, we can establish an analogous result as a direct consequence of Jaeger's 8-flow theorem \cite[pp. 207, 212]{jaeger1979flows} (which guarantees the existence of a nowhere-zero $\mathbb{Z}_2^3$-flow).

\begin{proposition}
	For each bridgeless graph $G$, $\Phi^\infty_3(G) = 2$.
    \label{prop:chebyshev_bound_3_dim}
\end{proposition}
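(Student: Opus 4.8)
The plan is to mimic the proof of Proposition~\ref{prop:manhattan_upper_seymour}, replacing Seymour's two-flow lemma with Jaeger's 8-flow theorem. By Jaeger's theorem, every bridgeless graph $G$ admits a nowhere-zero $\mathbb{Z}_2^3$-flow, that is, an orientation-free flow $\psi\colon E(G)\to\mathbb{Z}_2^3$ with $\psi(e)\neq(0,0,0)$ for every edge $e$ and with the conservation law holding coordinatewise over $\mathbb{Z}_2$. Equivalently, $G$ is the union of three even subgraphs (cycles) $C_1,C_2,C_3$ such that every edge lies in at least one of them, where $C_j$ is the support of the $j$-th coordinate of $\psi$.

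First I would lift this $\mathbb{Z}_2^3$-flow to an $\mathbb{R}^3$-flow. For each coordinate $j\in\{1,2,3\}$, the subgraph $C_j$ is even, hence it decomposes into edge-disjoint circuits; orienting each such circuit consistently and assigning the value $1$ to every edge on it yields a $\{0,\pm1\}$-valued real flow $\varphi_j$ on $G$ whose support is exactly $C_j$. Then define $\varphi(e):=(\varphi_1(e),\varphi_2(e),\varphi_3(e))\in\mathbb{R}^3$. Since each $\varphi_j$ satisfies flow conservation over $\mathbb{R}$, so does $\varphi$. For every edge $e$, at least one coordinate of $\varphi(e)$ is $\pm1$ (because $e$ lies in some $C_j$), and every coordinate lies in $\{0,\pm1\}$, so $\|\varphi(e)\|_\infty=1$. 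Thus $1\le\|\varphi(e)\|_\infty\le 1=r-1$ with $r=2$, meaning $\varphi$ is a $(2,3)$-ChNZF, and therefore $\Phi^\infty_3(G)\le 2$.

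Finally I would note the matching lower bound: for any bridgeless graph and any dimension $d$, $\Phi^\infty_d(G)\ge 2$, because the definition requires $r\ge 2$ (equivalently, the nowhere-zero condition forces $\|\varphi(e)\|_\infty\ge 1$ and the upper constraint $r-1\ge 1$); this is already built into the definition of $\Phi^\infty_d$. Combining the two bounds gives $\Phi^\infty_3(G)=2$.

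The only real subtlety — and it is minor — is the passage from a $\mathbb{Z}_2^3$-flow to a coordinatewise integral $\mathbb{R}^3$-flow with the right support, i.e.\ checking that the support of a nowhere-zero $\mathbb{Z}_2^3$-flow genuinely covers all edges and that each coordinate's support is an even subgraph that can be realised by a $\{0,\pm1\}$ real flow. This is standard (it is the usual correspondence between $\mathbb{Z}_2$-flows and even subgraphs, and between even subgraphs and $\{0,\pm 1\}$ real flows), so no genuine obstacle arises; the argument is a routine adaptation of Proposition~\ref{prop:manhattan_upper_seymour} with $8$-flows in place of $2$-flows, and unlike the two-dimensional case it needs no appeal to Proposition~\ref{prop:manhattan_chebyshev_equivalence} since the Chebyshev norm of a $\{0,\pm1\}^3$ vector is read off directly.
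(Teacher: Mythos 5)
Your proposal is correct and follows essentially the same route as the paper: both start from Jaeger's nowhere-zero $\mathbb{Z}_2^3$-flow, interpret each coordinate as a $2$-flow, and observe that the resulting $\mathbb{R}^3$-valued flow has Chebyshev norm exactly $1$ on every edge. You merely spell out the standard lift from $\mathbb{Z}_2$-flows to $\{0,\pm1\}$-valued real flows, which the paper leaves implicit.
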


\begin{proof} 
We begin with a $\mathbb{Z}_2^3$-NZF on $G$. Each coordinate of this flow can be interpreted as a $2$-flow, and no edge has all three coordinates equal to zero simultaneously. Therefore, this defines a $(2,3)$-ChNZF on $G$, as desired.
\end{proof}

While we are able to prove the result for the Chebyshev norm, we are not currently able to establish an analogous bound with respect to the Manhattan norm. We therefore state the following as a conjecture.

\begin{conjecture}
For each bridgeless graph $G$, $\Phi_3^1(G) = 2$.
\end{conjecture}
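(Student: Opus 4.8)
The plan is to mimic the strategy that worked for $\Phi_3^\infty(G) = 2$, but to find a suitable collection of $\mathbb{R}^3$-vectors of Manhattan norm exactly $1$ that is rich enough to support a flow on every bridgeless graph. Since $\Phi^1_3(G)\ge 2$ holds trivially (any MNZF needs every edge value to have norm at least $1$, and $r\ge 2$ by definition), the whole content is the upper bound: we must produce, for each bridgeless $G$, an $\mathbb{R}^3$-flow using only vectors $\boldsymbol{x}$ with $\|\boldsymbol{x}\|_1 = 1$. Equivalently, we need a nowhere-zero flow valued in the octahedron's surface $\{\boldsymbol{x} : \|\boldsymbol{x}\|_1 = 1\}$. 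The natural first step is to exploit the well-known fact (Jaeger, Kilpatrick) that every bridgeless graph has a nowhere-zero flow that decomposes as a sum of three $2$-flows, i.e.\ an $\mathbb{R}^3$-flow using values in $\{-1,0,1\}^3$ with no edge assigned $(0,0,0)$; this is exactly the $\mathbb{Z}_2^3$-flow used in Proposition~\ref{prop:chebyshev_bound_3_dim}, read over $\mathbb{Z}$. The problem is that such a vector may have Manhattan norm $2$ or $3$, so it sits strictly outside the octahedron surface and cannot be used directly.

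The key idea I would pursue is a \emph{rescaling-per-edge} combined with a small perturbation. First, normalise: replace each nonzero integer vector $\boldsymbol{v}_e \in \{-1,0,1\}^3$ by $\boldsymbol{v}_e / \|\boldsymbol{v}_e\|_1$, so every edge now carries a vector of Manhattan norm exactly $1$ lying at a vertex, an edge-midpoint, or a face-centre of the octahedron. This destroys flow conservation, because different edges at a vertex get rescaled by different factors ($1$, $1/2$, or $1/3$). To repair conservation I would set up, on each of the three coordinate layers separately, a correction flow: in coordinate $i$, the "defect" at each vertex is a rational number, and since the defects sum to zero over the graph (they are coordinatewise sums of a genuine flow before rescaling, minus the rescaled one — need to check this telescopes), there exists a rational circulation realising the correction. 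The real work is to bound the resulting combined vectors so they still have Manhattan norm exactly $1$; this will not hold on the nose, so one instead aims for norm in a narrow band $[1-\varepsilon, 1+\varepsilon]$ and then rescales the target interval, proving $\Phi_3^1(G) \le 2 + \delta$ for every $\delta > 0$, which by rationality (Theorem~\ref{th:rationality}) and the fact that the infimum is attained forces $\Phi_3^1(G) = 2$.

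An alternative and perhaps cleaner route: build the flow directly as a convex combination. Take the three $2$-flows $\psi_1,\psi_2,\psi_3$ whose coordinatewise supports cover $E(G)$, and on edge $e$ let $S_e \subseteq \{1,2,3\}$ be the set of coordinates in which $e$ is nonzero; assign $e$ the vector with $i$-th coordinate $\psi_i(e)/|S_e|$. Again the per-edge normaliser $1/|S_e|$ breaks conservation. Here I would look for three auxiliary flows (or a single $\mathbb{R}^3$-circulation of small sup-norm) correcting the defect, exactly as above. Either way, the \textbf{main obstacle} is the same: the octahedron surface is not a linear subspace, so there is no linear map sending integer flows to octahedron-valued flows (this is precisely the phenomenon flagged after Corollary~\ref{cor:manhequalcheb} and illustrated by the $K_4$ example, where the six unit vectors $(\pm1,0,0)$-type in $\mathbb{R}^3$ — here $\mathbb{R}^2$ — admit no zero-sum triple). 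Overcoming it requires genuinely using the third dimension's slack to absorb corrections while keeping norms pinned near $1$, and I expect the delicate bookkeeping of those corrections — showing the correction circulation can be taken small enough — to be where a full proof would either succeed or reveal why the conjecture is hard.
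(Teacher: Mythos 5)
This statement is one of the paper's open conjectures, not a theorem: the authors explicitly write that they are ``not currently able to establish an analogous bound with respect to the Manhattan norm,'' and the strongest things they prove are the unconditional bound $\Phi_3^1(G)\le 5/2$ (Proposition~\ref{prop:phi_1_3_bound}, via the $8$-flow theorem and three hand-picked vectors) and the conditional statement that $\Phi_3^1(G)=2$ follows from the $5$-OCDC Conjecture~\ref{conj:5ocdc} (Proposition~\ref{prop:manhattan_3d}, by assigning five points of $\mathbb{R}^3$ at pairwise Manhattan distance $1$ to the five oriented cycles and taking differences). So there is no proof in the paper to match yours against, and any complete argument you produce would be a new result.

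Your sketch does not close the gap, and the place where it fails is identifiable. Starting from a $\{-1,0,1\}^3$-valued nowhere-zero flow and normalising each edge vector by its Manhattan norm introduces conservation defects whose size is governed by the fixed rescaling factors $1$, $1/2$, $1/3$; these defects are bounded-away-from-zero constants, not quantities you control with a parameter. A correction circulation repairing them therefore has no reason to be small: along the cycles used to realise it, it can shift coordinates by amounts comparable to $1$, destroying the requirement $\|\varphi(e)\|_1\in[1-\varepsilon,1+\varepsilon]$. Consequently the claimed conclusion ``$\Phi_3^1(G)\le 2+\delta$ for every $\delta>0$'' is unsupported --- there is no limiting process in your construction, just one fixed perturbation of fixed magnitude. (The final step, passing from the infimum being $2$ to equality via attainment and Theorem~\ref{th:rationality}, would be fine if the premise held.) You correctly diagnose the real obstruction --- the octahedron $\{\|\boldsymbol{x}\|_1=1\}$ is not closed under the linear operations that flow arguments rely on, unlike the situation in Proposition~\ref{prop:chebyshev_bound_3_dim} where sums of $2$-flows stay inside the Chebyshev constraints --- but identifying the obstruction is not the same as overcoming it. If you want a workable target, note that the paper's conditional proof sidesteps linearity entirely by choosing the cycle flow values as \emph{points} whose pairwise differences all lie exactly on the octahedron; an unconditional proof would need either such a configuration compatible with a cover that every bridgeless graph is known to possess, or a genuinely new idea.
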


Although we are unable to prove this in full generality, we can establish it as a consequence of the well-known 5-Oriented Cycle Double Cover Conjecture, which we recall here for the reader's convenience.

A usual construction of a flow on a graph consists of assigning flow values to several cycles. For this reason it is convenient to consider the broadest definition of a cycle as a subgraph whose all vertices have even degree. Note that an empty subgraph is also a cycle. An \emph{oriented cycle} is a cycle with an orientation of edges such that each vertex has equal indegree and outdegree. For a positive integer $k$, an \emph{oriented $k$-cycle double-cover} (from now on also referred to as a $k$-OCDC) of $G$ is a multiset of $k$ oriented cycles such that each edge of $G$ occurs in each of its two orientations in exactly one of the cycles. 

It is not known whether every bridgeless graph admits a $k$-OCDC for some value of $k$, but a specific conjecture has been proposed in this direction.

\begin{conjecture} \emph{\cite{archdeacon_5ocdc}}
    Each bridgeless graph has a $5$-OCDC.\label{conj:5ocdc}
\end{conjecture}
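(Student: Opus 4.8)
The plan is to be forthright: the existence of a $5$-OCDC is a celebrated open conjecture, provably at least as strong as the (unoriented) Cycle Double Cover Conjecture, so what follows is a strategy together with an honest account of where it breaks down, rather than a complete proof. The first move is the standard reduction to cubic graphs. A vertex of degree at least $4$ can be split into vertices of smaller degree while keeping the graph bridgeless, and any $5$-OCDC of the split graph pulls back to one of the original by reading the oriented even subgraphs through the newly created vertices; small edge cuts (bridges are excluded, and cuts of size $2$ or $3$) can be contracted and the partial covers reassembled along them. Iterating, it suffices to treat cyclically $4$-edge-connected cubic graphs of girth at least $5$. As with the unoriented CDC, one must verify that these reductions respect the consistent-orientation condition, which is routine but not entirely automatic.

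Among cubic graphs the colourable ones cause no trouble: a cubic graph admits a nowhere-zero $4$-flow exactly when it is $3$-edge-colourable, and the conjecture is known to hold for such graphs, after which one pads with empty oriented cycles (the empty subgraph counts as a cycle) to reach exactly five. This is the flow-theoretic shadow of the four-colour theorem, in whose setting Archdeacon's conjecture originated. The reduction therefore leaves only the snarks, the non-$3$-edge-colourable cyclically $4$-edge-connected cubic graphs, and it is here that every approach stalls.

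The snark case is the main obstacle, and I do not expect to overcome it. The general-purpose flow tools available earlier in the paper give no direct leverage: Seymour's pair of a $2$-flow and a $3$-flow (Lemma~\ref{lem:2_flow_3_flow_seymour}) and Jaeger's $8$-flow theorem each express $E(G)$ as a union of at most three even subgraphs, but these covers have the wrong multiplicities, with edges covered one, two or three times rather than exactly twice, and they carry no consistent orientation, so they are not oriented double covers at all. A serious attempt would have to inject genuine structural information about snarks, for instance assembling the five oriented even subgraphs from a recursive decomposition along small cuts, or routing the argument through the Petersen-colouring and Berge--Fulkerson circle of conjectures, each of which is itself open and would imply a $5$-OCDC.

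In summary, the reduction to cubic graphs and the disposal of the $3$-edge-colourable case are routine, but the snark case is exactly where the difficulty concentrates: even the weaker assertion that every snark has \emph{some} cycle double cover is unresolved, and demanding both a consistent orientation and only five cycles only sharpens the challenge. This is why the statement is recalled here as a conjecture and used in what follows solely as a hypothesis.
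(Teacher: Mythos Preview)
Your assessment is correct: the statement is recorded in the paper as a conjecture, cited to Archdeacon, and is not proved there; it is invoked only as a hypothesis in Proposition~\ref{prop:manhattan_3d}. There is therefore no ``paper's own proof'' to compare against, and your decision to explain the known reductions and name the snark case as the obstruction, rather than to feign a proof, is the right call. Your remarks on the reduction to cubic graphs, the disposal of the $3$-edge-colourable case via a nowhere-zero $4$-flow, and the inadequacy of Seymour's and Jaeger's flow decompositions for producing an oriented double cover are all accurate and match the standard understanding of why Conjecture~\ref{conj:5ocdc} remains open.
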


We are now in a position to derive an analogue of Proposition \ref{prop:chebyshev_bound_3_dim} for the Manhattan norm.

\begin{proposition}
	If Conjecture \ref{conj:5ocdc} holds, then $\Phi_3^1(G)=2$ for each bridgeless graph $G$.
	\label{prop:manhattan_3d}
\end{proposition}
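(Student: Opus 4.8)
The plan is to run the standard dictionary between cycle covers and flows, choosing the vectors carefully so that the Manhattan norm lands exactly on $1$. Assume Conjecture~\ref{conj:5ocdc} and fix a $5$-OCDC $\mathcal{C}=\{C_1,\dots,C_5\}$ of $G$ together with a reference orientation of $E(G)$. Each oriented cycle $C_i$ gives rise to an integer-valued flow $\varepsilon_i\colon E(G)\to\{-1,0,1\}$, where $\varepsilon_i(e)$ is $+1$ if $C_i$ traverses $e$ in the reference direction, $-1$ if it traverses $e$ against it, and $0$ if $e\notin C_i$; the flow-conservation law for $\varepsilon_i$ is exactly the balanced-degree condition defining an oriented cycle. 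The defining property of a $5$-OCDC says that for every edge $e$ there are precisely two indices $i\neq j$ with $\varepsilon_i(e)\neq 0$, one of them equal to $+1$ and the other to $-1$; in particular $\sum_{i=1}^5\varepsilon_i(e)=0$.

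Next I would pick five vectors $w_1,\dots,w_5\in\mathbb{R}^3$ forming an equilateral set for the Manhattan norm at distance $1$, that is, $\|w_i-w_j\|_1=1$ whenever $i\neq j$. Such a configuration exists: the six cross-polytope vertices $\pm\tfrac12 \boldsymbol{e}_1,\ \pm\tfrac12 \boldsymbol{e}_2,\ \pm\tfrac12 \boldsymbol{e}_3$ (with $\boldsymbol{e}_1,\boldsymbol{e}_2,\boldsymbol{e}_3$ the standard basis) are pairwise at $\ell_1$-distance $1$, and any five of them suffice.

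Now define $\varphi(e)=\sum_{i=1}^5 \varepsilon_i(e)\,w_i$ for each edge $e$. As a real linear combination of the flows $\varepsilon_i$, the map $\varphi$ is an $\mathbb{R}^3$-flow. For a given edge $e$, letting $i,j$ be the unique indices with $\varepsilon_i(e)=1$ and $\varepsilon_j(e)=-1$, we get $\varphi(e)=w_i-w_j$, hence $\|\varphi(e)\|_1=1$. Thus $\varphi$ is a $(2,3)$-MNZF, so $\Phi_3^1(G)\le 2$; since the definition of $\Phi_d^1$ forces $r\ge 2$, the reverse inequality is trivial and $\Phi_3^1(G)=2$, as in Proposition~\ref{prop:chebyshev_bound_3_dim}.

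I do not expect a genuine obstacle here: conceptually the whole argument reduces to observing that an equilateral $5$-point set in $(\mathbb{R}^3,\|\cdot\|_1)$ exists, which is immediate from the octahedron. The only point requiring care is the bookkeeping of orientations — one must use that the two nonzero values $\varepsilon_i(e),\varepsilon_j(e)$ are $+1$ and $-1$ (not both $+1$), so that $\varphi(e)$ is a \emph{difference} $w_i-w_j$; this is precisely what pins the norm to $1$ and is exactly where the \emph{oriented} version of the cycle-double-cover hypothesis is needed, an unoriented $5$-CDC would instead force one to control sums $\|w_i+w_j\|_1$, which does not fit the cross-polytope configuration as cleanly.
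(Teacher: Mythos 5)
Your argument is correct and is essentially the paper's own proof: both rest on the same five-point $\ell_1$-equilateral set drawn from the vertices of the scaled octahedron and assign the edge flow as the difference $w_i-w_j$ of the points attached to the two cycles covering the edge in opposite orientations. Your added remark about why the \emph{oriented} hypothesis is what pins the norm to $1$ is a fair elaboration of a step the paper leaves implicit, but the route is the same.
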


\begin{proof}
	There exist five points in $\mathbb R^3$ with mutual (Manhattan) distances equal to $1$: for instance, we can consider the five points with coordinates $(\frac12,0,0)$, $(-\frac12,0,0)$, $(0,\frac12,0)$, $(0,-\frac12,0)$, $(0,0,\frac12)$.
    Since $G$ can be covered by five oriented cycles, we can assign these points as flow values to the cycles, defining the flow on each edge as the difference of the flow values assigned to the two cycles that contain it. By construction, each resulting edge flow has Manhattan norm exactly equal to $1$.
\end{proof}

With a stronger assumption of $4$-OCDC, we can provide unit vector flow already in two dimensions.

\begin{proposition}
    \label{prop:4ocdc-2flow}
    If a graph $G$ has a $4$-OCDC, then $\Phi_2^1(G)=\Phi_2^\infty(G)=2$.
\end{proposition}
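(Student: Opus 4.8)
The plan is to reduce the statement to exhibiting a suitable ``equilateral'' configuration of four points in the plane and then transporting it along the four oriented cycles of the cover, exactly as in the proof of Proposition~\ref{prop:manhattan_3d}. Since $r \ge 2$ in the definition of both flow numbers, we always have $\Phi_2^1(G), \Phi_2^\infty(G) \ge 2$, and by Corollary~\ref{cor:manhequalcheb} the two parameters coincide; so it suffices to construct a single $(2,2)$-MNZF on $G$, that is, an $\mathbb{R}^2$-flow $\varphi$ with $\|\varphi(e)\|_1 = 1$ for every edge $e$.

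First I would record the elementary geometric fact that the four points $p_1 = (\tfrac12, 0)$, $p_2 = (-\tfrac12, 0)$, $p_3 = (0, \tfrac12)$, $p_4 = (0, -\tfrac12)$ satisfy $\|p_i - p_j\|_1 = 1$ for all $i \ne j$ (a direct check of the six pairs). Next, fix a $4$-OCDC $\{C_1, C_2, C_3, C_4\}$ of $G$ and, for each $i$, regard the oriented cycle $C_i$ as carrying the constant partial flow $p_i$ along its orientation; because every vertex has equal in- and out-degree within $C_i$, this partial flow already satisfies the flow-conservation condition at each vertex, hence so does the sum $\varphi := \sum_{i=1}^4 \varphi_{C_i}$. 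Finally, for an edge $e$ with a chosen reference orientation, the definition of an OCDC says that $e$ appears in its reference orientation in exactly one cycle $C_i$ and in the reverse orientation in exactly one (necessarily different) cycle $C_j$; therefore $\varphi(e) = p_i - p_j$, which has Manhattan norm $1$ by the previous step. Thus $\varphi$ is a $(2,2)$-MNZF, so $\Phi_2^1(G) \le 2$, and combined with the trivial lower bound and Corollary~\ref{cor:manhequalcheb} we conclude $\Phi_2^1(G) = \Phi_2^\infty(G) = 2$.

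There is no serious obstacle here; the only point requiring a little care is the observation that, for every edge, the two cycles covering its two orientations are distinct, so that $\varphi(e)$ is genuinely a difference of two distinct points of the configuration and hence nonzero --- this is immediate since an oriented cycle, being a subgraph equipped with an orientation, contains each of its edges exactly once. One could alternatively run the argument directly in the Chebyshev norm using the four corners $(0,0),(1,0),(0,1),(1,1)$ of the unit square, which are pairwise at Chebyshev distance $1$; but invoking Corollary~\ref{cor:manhequalcheb} avoids repeating the computation.
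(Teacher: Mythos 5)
Your proof is correct and is essentially the paper's argument: assign to the four oriented cycles four points that are pairwise at distance $1$ in the chosen norm, so that each edge, lying in exactly two distinct cycles with opposite orientations, receives a difference of two such points as its flow value. The paper phrases it directly in the Chebyshev norm with the points $\left(\pm\tfrac12,\pm\tfrac12\right)$, which are exactly the images of your Manhattan configuration under the linear map of Proposition~\ref{prop:manhattan_chebyshev_equivalence}, so the two write-ups differ only by that change of coordinates.
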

\begin{proof}
    By assigning flows $\left(\frac12, \frac12\right)$, $\left(\frac12, -\frac12\right)$, $\left(-\frac12, \frac12\right)$, $\left(-\frac12, -\frac12\right)$ to the four oriented cycles in a $4$-OCDC, we get flow values with the Chebyshev norm equal to $1$ on each edge, therefore the graph has a $(2, 2)$-ChNZF.
\end{proof}
In three dimensions, we are able to derive an upper bound without any additional assumptions.

\begin{proposition}
\label{prop:phi_1_3_bound}
For any bridgeless graph $G$ it holds $\Phi_3^1(G)\leq \frac{5}{2}$.
\end{proposition}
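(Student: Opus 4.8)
The plan is to reuse the combinatorial input behind Proposition~\ref{prop:chebyshev_bound_3_dim}, namely Jaeger's $8$-flow theorem in the guise of a nowhere-zero $\mathbb{Z}_2^3$-flow. Writing such a flow coordinatewise, one obtains three $\mathbb{Z}_2$-flows, i.e.\ three even subgraphs $C_1,C_2,C_3$ of $G$, whose union is $E(G)$. Fix an arbitrary orientation of $G$ and an Eulerian orientation of each $C_i$; this turns each $C_i$ into a real $\{0,\pm 1\}$-valued flow $\chi_i$ on $G$ with $\mathrm{supp}(\chi_i)=C_i$. For a triple of vectors $u_1,u_2,u_3\in\mathbb{R}^3$ still to be chosen, put $\varphi=\sum_{i=1}^3 \chi_i\, u_i$, that is, $\varphi(e)=\sum_{i\colon e\in C_i}\chi_i(e)\,u_i$. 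As a sum of vector-valued flows, $\varphi$ is a flow; and since $C_1\cup C_2\cup C_3=E(G)$, for every edge $e$ the value $\varphi(e)$ equals some nonempty signed sum $\sum_{i\in S}\pm u_i$ with $\varnothing\neq S\subseteq\{1,2,3\}$.

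Hence the statement reduces to a purely geometric claim: it suffices to find $u_1,u_2,u_3\in\mathbb{R}^3$ such that $\bigl\|\sum_{i\in S}\pm u_i\bigr\|_1\in[1,\tfrac32]$ for every nonempty $S$ and every choice of signs, because then $\varphi$ is a $(\tfrac52,3)$-MNZF. Such vectors must point along ``$\ell_1$-diagonal'' directions so that forming further $\pm$-sums produces cancellation rather than growth; concretely one can take the three coordinate permutations of $\tfrac18(2,-3,-3)$, i.e.\ $u_i=\tfrac58 e_i-\tfrac38(1,1,1)$. A direct check gives $\|u_i\|_1=1$, $\|u_i+u_j\|_1=1$, $\|u_i-u_j\|_1=\tfrac54$, $\|u_1+u_2+u_3\|_1=\tfrac32$, and $\|\pm u_1\pm u_2\pm u_3\|_1=\tfrac32$ for every sign pattern, so all relevant norms lie in $\{1,\tfrac54,\tfrac32\}\subseteq[1,\tfrac32]$. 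Using invariance of the constraints under negating individual $u_i$ and under cyclic permutation of coordinates, these finitely many cases exhaust all nonempty signed sums, which finishes the argument.

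The main obstacle is finding the vectors. Naive choices fail badly: for instance, if one sets $u_1=(1,0,0)$ then there is no $u_2$ at all with $\|u_2\|_1$ and $\|u_1\pm u_2\|_1$ all in $[1,\tfrac32]$ (from $\|u_1+u_2\|_1+\|u_1-u_2\|_1\ge 2$ one forces $\|u_2\|_1\le\tfrac12$, contradicting $\|u_2\|_1\ge1$, unless $u_2=\pm(\tfrac12,0,0)$, which fails the lower bound on $\|u_1\mp u_2\|_1$). One has to identify the correct diagonal directions and then tune the two free parameters so that the smallest and largest $\ell_1$-norms among the thirteen (up to an overall sign) signed sums differ by at most the factor $\tfrac32$. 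The remaining ingredients — that a $\mathbb{Z}_2^3$-flow yields three covering even subgraphs, that an even subgraph carries a $\pm1$ real flow via an Eulerian orientation, and that a sum of vector-valued flows is a vector-valued flow — are routine and are already implicit in this section.
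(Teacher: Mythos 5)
Your proof is correct and essentially identical to the paper's: both use Jaeger's $8$-flow theorem to obtain three even subgraphs covering $E(G)$, orient them, and assign each a fixed vector in $\mathbb{R}^3$ so that all nonempty signed sums have $\ell_1$-norm in $\{1,\tfrac54,\tfrac32\}$; your vectors $u_i$ are (up to sign and relabelling) exactly the paper's $q_i=(\tfrac38,\tfrac38,-\tfrac14)$ and its permutations. The norm computations match the paper's, so nothing further is needed.
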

\begin{proof}
Define the following vectors in $\mathbb{R}^3$:
\begin{align*}
&q_1=\left(\frac{3}{8},\ \frac{3}{8},\ -\frac{1}{4}\right),\\
&q_2=\left(\frac{3}{8},\ -\frac{1}{4},\ \frac{3}{8}\right),\\
&q_3=\left(-\frac{1}{4},\ \frac{3}{8},\ \frac{3}{8}\right).
\end{align*}
By the 8-flow theorem, the graph $G$ can be covered by three cycles $C_1, C_2, C_3$. We orient each cycle arbitrarily and assign to each oriented cycle $C_i$ the flow vector $q_i$. The total flow is then defined as the sum of these three cycle flows.
By symmetry, it suffices to find norms of $q_1, q_1+q_2, q_1-q_2, q_1+q_2+q_3$ and $q_1+q_2-q_3$ which are 
\begin{align*}
    \|q_1\|_1&=1,\\
    \|q_1+q_2\|_1&=1,\\
    \|q_1-q_2\|_1&=\frac{5}{4},\\
    \|q_1+q_2+q_3\|_1&=\frac{3}{2},\\
    \|q_1+q_2-q_3\|_1&=\frac{3}{2}.
\end{align*}
By construction and the norm evaluations above, for every edge $e \in E$, the resulting flow satisfies
\[
\|\varphi(e)\|_1 \in \left\{1, \frac{5}{4}, \frac{3}{2}\right\}.
\]

\end{proof}

Building on the previous idea, we consider a more general notion: an \emph{$m$-cycle $k$-cover} is a collection of $m$ cycles that together cover each edge exactly $k$ times. This broader perspective leads to the following result.

\begin{proposition}\label{prop:mcyclekcover}
If a graph $G$ has an $m$-cycle $k$-cover, then for any integer $n$ with $0\leq n < k$ we have $\Phi_{m-n}^1(G)\leq 1+\frac{k}{k-n}$. In particular, $\Phi_m^1(G)=2$.
\end{proposition}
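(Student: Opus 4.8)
The plan is to generalize the explicit-vector constructions already used in Propositions~\ref{prop:manhattan_3d} and~\ref{prop:phi_1_3_bound}. Given an $m$-cycle $k$-cover $C_1, \dots, C_m$, orient each cycle arbitrarily. I will assign to each oriented cycle $C_i$ a vector $v_i \in \mathbb{R}^{m-n}$, and define the flow on an edge $e$ as $\sum_{i : e \in C_i} \pm v_i$, where the sign records whether $e$ is traversed with or against the orientation of $C_i$. Linearity of the assignment guarantees flow conservation, exactly as in the earlier proofs, so the entire content is choosing the $v_i$ so that every sum of $k$ of them (with signs) has Manhattan norm in the interval $\left[1, \frac{k}{k-n}\right]$.

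The natural choice is to take $v_i = \frac{1}{2(k-n)} u_i$, where $u_1, \dots, u_m \in \mathbb{R}^{m-n}$ are vectors summing to zero with $\|u_i\|_1 = $ (something controlled) for each $i$ and, more importantly, with every $\pm 1$ combination of any $k$ of them having $\ell_1$-norm bounded appropriately. A clean way to get this is to use the $m$ vectors $\pm e_1, \dots$ padded out — i.e. take $m$ vectors in $\mathbb{R}^{m-n}$ that are as ``spread out'' as possible. Actually the simplest route: pick $v_i$ to be the rows of a matrix so that the sum of all $m$ of them is zero (so the all-ones combination, which is what you'd get if an edge lay in every cycle when $k=m$, vanishes appropriately) — but in general each edge lies in exactly $k$ of the $m$ cycles, so I need: for every $k$-subset $S \subseteq \{1,\dots,m\}$ and every sign pattern $\varepsilon \in \{\pm 1\}^S$ that can actually occur, $1 \le \big\| \sum_{i \in S} \varepsilon_i v_i \big\|_1 \le \frac{k}{k-n}$. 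The first key step is to reduce which sign patterns actually occur: because each cycle is a union of circuits and the flow-conservation identity already holds coordinatewise, I do not need the bound for all sign patterns, only for those arising from an honest edge of $G$; still, it is cleanest to just make the bound hold for all of them.

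The concrete construction I would try: work in $\mathbb{R}^{m-n}$ and let $v_i = \frac{1}{2(k-n)}\big(w_i - \bar w\big)$ for suitable small integer vectors $w_i$, or — mirroring the $d=3$, $k=3$ case where the three vectors were $\left(\frac38,\frac38,-\frac14\right)$ and its cyclic shifts — take the $v_i$ to be a symmetric family obtained from a single vector by coordinate permutation, scaled so that $\|v_i\|_1 = \frac{1}{2}\cdot\frac{k}{k-n}$ and a single ``same-sign'' sum of $k$ of them has norm exactly $1$, while a ``mixed-sign'' sum has strictly larger norm but still at most $\frac{k}{k-n}$. For the ``in particular'' claim $\Phi_m^1(G) = 2$: take $n = k-1$, so the bound reads $\Phi_{m-k+1}^1(G) \le 1 + k$, which is too weak — so instead I take $n = 0$ in a different cover, or rather I observe that an $m$-cycle $k$-cover where we allow repetition gives, with $n=0$, the bound $\Phi_m^1(G) \le 2$; and every bridgeless graph trivially admits an $m$-cycle $k$-cover for suitable $m$ (e.g. doubling a cycle double cover, or by the $8$-flow theorem with $m=3$, $k=2$ gives a $3$-cycle $2$-cover and hence, with $n=0$, $\Phi_3^1(G) \le 2$ — wait, that needs $m-n = 3$ coordinates, consistent with Proposition~\ref{prop:manhattan_3d}-style reasoning). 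The cleanest statement of the ``in particular'' is: with $n=0$ the bound gives $\Phi_m^1(G) \le 1 + \frac{k}{k} = 2$, and combined with the trivial lower bound $\Phi_d^1(G) \ge 2$ (from the nowhere-zero condition), equality holds whenever an $m$-cycle $k$-cover exists — and it always does for some $m$ by the $8$-flow theorem ($m=3$, $k=2$).

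I expect the main obstacle to be the explicit norm bookkeeping: verifying that a single symmetric family of $m$ vectors in $\mathbb{R}^{m-n}$ has the property that \emph{all} signed $k$-sums land in $\left[1, \frac{k}{k-n}\right]$, not just the extreme ones. In the $m=k=3$ case the authors got away with checking four representatives by symmetry; in general one must argue that the $\ell_1$-norm of a signed sum $\sum_{i\in S}\varepsilon_i v_i$ is minimized when all signs agree (giving the lower bound $1$, by a convexity/triangle-inequality argument on each coordinate) and is maximized by a worst-case cancellation pattern that can still be bounded by $\frac{k}{k-n}$ using the choice of scaling. I would handle this by choosing the $v_i$ to be $\frac{1}{2(k-n)}$ times a $\{+1\}$/$\{-1\}$-ish pattern (e.g. from a combinatorial design or simply $v_i$ supported so that coordinate $j$ of $v_i$ is positive for exactly the $i$'s in one half and negative for the other), making the coordinatewise analysis of $\sum_{i\in S}\varepsilon_i v_i$ transparent and reducing everything to counting. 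If that design turns out not to exist for all $m,n,k$, the fallback is a dimension-by-dimension greedy/inductive construction, but I anticipate the symmetric ansatz generalizing the $\left(\frac38,\frac38,-\frac14\right)$ example will suffice.
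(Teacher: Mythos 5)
Your plan has the right skeleton (orient the cycles, assign each a vector, let linearity give conservation), but it is missing the one idea that makes the proposition easy, and the substitute you propose is left unverified at exactly the point where it would be hardest. The paper's construction assigns the scaled \emph{canonical basis vectors} $\frac{1}{k-n}b_1,\dots,\frac{1}{k-n}b_{m-n}$ to the first $m-n$ cycles and the \emph{zero vector} to the remaining $n$ cycles. Because distinct cycles then contribute to distinct coordinates, there is no cancellation whatsoever: an edge lying in exactly $k$ cycles of the cover lies in at least $k-n$ and at most $k$ of the non-discarded ones, so $\varphi(e)$ has between $k-n$ and $k$ nonzero entries, each equal to $\pm\frac{1}{k-n}$, and $\|\varphi(e)\|_1$ is literally a count divided by $k-n$, landing in $\left[1,\frac{k}{k-n}\right]$ with no case analysis. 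Your proposal instead assigns nonzero, non-orthogonal vectors to all $m$ cycles and then needs every signed $k$-sum to have $\ell_1$-norm in the target interval. That is a genuinely harder combinatorial design problem; you acknowledge as much (``the main obstacle \dots verifying that a single symmetric family \dots has the property that all signed $k$-sums land in $\left[1,\frac{k}{k-n}\right]$'') and offer only an ansatz plus a fallback, neither of which is carried out. In particular, your claim that the norm of a signed sum ``is minimized when all signs agree'' is not something you can assert in general for an arbitrary symmetric family, and the $\left(\frac38,\frac38,-\frac14\right)$ example you are generalizing from belongs to Proposition~\ref{prop:phi_1_3_bound}, which proves a \emph{weaker} bound ($\frac52$ rather than $2$) precisely because cancellation there is unavoidable. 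So as written the proof has a hole where the construction should be.

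The ``in particular'' part of your argument is essentially fine once untangled: with $n=0$ the bound reads $\Phi_m^1(G)\le 2$, and the reverse inequality $\Phi_d^1(G)\ge 2$ holds trivially since every nowhere-zero flow value must have norm at least $1$ and hence $r-1\ge 1$. But note the proposition is conditional on $G$ having an $m$-cycle $k$-cover, so you do not need to invoke the $8$-flow theorem to supply one; that detour is harmless but irrelevant to the statement.
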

\begin{proof}
Let $C_1,\dots,C_m$ be an $m$-cycle $k$-cover of $G$ and let $b_1,\dots,b_{m-n}\in\mathbb{R}^{m-n}$ be the canonical basis vectors. 
We define the flow $\varphi$ by orienting each cycle $C_i$ and sending a flow of value $\frac{1}{k-n}b_i$ along the cycle $C_i$, for $i=1,\dots,m-n$.
Since the cycles form a $k$-cover, for every edge $e$ the vector $\varphi(e)\in \mathbb{R}^{m-n}$ has $i$ entries of $\pm 1$ for some $k-n\leq i\leq k$ and zeroes otherwise. Thus, it holds $1\leq \|\varphi(e)\|_1\leq \frac{k}{k-n}$ by construction.
\end{proof}
Since any bridgeless graph has a $7$-cycle $4$-cover by Bermond, Jackson and Jaeger \cite{BJJ}, we obtain the following:
\begin{corollary}
    For any bridgeless graph $G$ we have $\Phi_7^1(G) = 2$ and $\Phi_6^1(G)\leq \frac73$.
\end{corollary}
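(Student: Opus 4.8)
The plan is to invoke Proposition~\ref{prop:mcyclekcover} twice, with the $7$-cycle $4$-cover of Bermond, Jackson and Jaeger playing the role of the $m$-cycle $k$-cover. Concretely, we set $m = 7$ and $k = 4$ and then specialise the parameter $n$.

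First I would take $n = 0$. Then Proposition~\ref{prop:mcyclekcover} directly gives $\Phi_{m-n}^1(G) = \Phi_7^1(G) = 2$; this is exactly the ``In particular'' clause of that proposition applied to a $7$-cycle $4$-cover. Next I would take $n = 1$: the proposition then yields $\Phi_{m-n}^1(G) = \Phi_6^1(G) \leq 1 + \tfrac{k}{k-n} = 1 + \tfrac{4}{3} = \tfrac{7}{3}$, which is the second claimed bound. The only thing to check is that $0 \leq n < k$ holds in both cases, i.e.\ $0 \leq 0 < 4$ and $0 \leq 1 < 4$, which is immediate.

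There is essentially no obstacle here, since all the work is contained in Proposition~\ref{prop:mcyclekcover} and in the cited result of~\cite{BJJ} that every bridgeless graph admits a $7$-cycle $4$-cover. The one point worth a sentence of care is that the $7$-cycle $4$-cover from~\cite{BJJ} consists of cycles in the sense used throughout this section (even-degree subgraphs, possibly empty), so that the construction in the proof of Proposition~\ref{prop:mcyclekcover}---orienting each cycle and sending a flow of value $\tfrac{1}{k-n} b_i$ along $C_i$---applies verbatim. Once that is noted, the corollary follows in two lines.

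If I wanted to make the proof fully self-contained rather than a pure citation chain, I would instead unfold the $n=1$ case: fix a $7$-cycle $4$-cover $C_1, \dots, C_7$, orient each $C_i$, send flow $\tfrac13 b_i$ along $C_i$ for $i = 1, \dots, 6$ where $b_1, \dots, b_6$ is the canonical basis of $\mathbb{R}^6$, and observe that each edge lies in either $3$ or $4$ of $C_1, \dots, C_6$ (since it lies in exactly $4$ of all seven cycles), giving $\|\varphi(e)\|_1 \in \{1, \tfrac43\}$ and hence a $(\tfrac73, 6)$-MNZF. But given that Proposition~\ref{prop:mcyclekcover} is already proved, the short citation-based argument is preferable.
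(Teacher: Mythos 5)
Your proposal is correct and matches the paper's argument exactly: the corollary is stated as an immediate consequence of Proposition~\ref{prop:mcyclekcover} applied to the $7$-cycle $4$-cover of Bermond, Jackson and Jaeger, with $n=0$ giving $\Phi_7^1(G)=2$ and $n=1$ giving $\Phi_6^1(G)\leq 1+\frac{4}{3}=\frac{7}{3}$. Your optional unfolded version of the $n=1$ case is just the proof of that proposition specialised, so there is nothing further to add.
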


The existence of a $5$-CDC in a graph $G$ directly implies, from Proposition \ref{prop:mcyclekcover}, that $\Phi_5^1(G) = 2$. In the next subsection, we will prove, as a specific instance of a more general argument, that the presence of a $5$-CDC is also sufficient to prove that $\Phi_4^1(G)=2$.

Table \ref{table:upper_bounds} summarizes our results for the best known upper bounds in each dimension. For graphs which are only assumed to be bridgeless, the upper bounds in dimensions $4$ and $5$ are obtained by embedding the flows realized in one dimension lower.

\begin{table}[h!]
\centering
\begin{tabular}{ c || c | c | c | c | c | c }
$d$&2&3&4&5&6&7\\
\hline
4-OCDC&2&2&2&2&2&2\\
5-OCDC&3&2&2&2&2&2\\
5-CDC&3&$\frac{5}{2}$&2&2&2&2\\
Bridgeless&3&$\frac{5}{2}$&$\frac{5}{2}$&$\frac{5}{2}$&$\frac{7}{3}$&2\\
\end{tabular}
\caption{Best known upper bounds on $\Phi_d^1(G)$, depending on the assumptions on the graph $G$ (left column).}
\label{table:upper_bounds}
\end{table}

\subsection{Manhattan flows via Hadamard matrices}

A {\it Hadamard matrix} is a square matrix $H$ of order $n$ with entries $h_{ij} \in \{+1, -1\}$, which satisfies the property
\[
HH^T = nI_n
\]
where $H^T$ is the transpose of $H$ and $I_n$ is the identity matrix of order $n$.

This property implies that the rows (and consequently, the columns) of a Hadamard matrix are mutually orthogonal. In other words, if you compare two different rows, the number of positions where their elements are identical is $n/2$, and the number of positions where they differ is also $n/2$.

Hadamard matrices can only exist for orders $n=1$, $n=2$, or for orders that are a multiple of 4 (i.e., $n \equiv 0 \pmod{4}$). 

Below are examples of a Hadamard matrix of order 4 and one of order 8. For clarity, $+$ denotes $1$ and $-$ denotes $-1$.
\[
H_4 = \begin{pmatrix}
+ & + & + & + \\
+ & - & + & - \\
+ & + & - & - \\
+ & - & - & +
\end{pmatrix} 
\]
\[
H_8 = \begin{pmatrix}
H_4 & H_4 \\
H_4 & -H_4
\end{pmatrix}
= \begin{pmatrix}
+ & + & + & + & + & + & + & + \\
+ & - & + & - & + & - & + & - \\
+ & + & - & - & + & + & - & - \\
+ & - & - & + & + & - & - & + \\
+ & + & + & + & - & - & - & - \\
+ & - & + & - & - & + & - & + \\
+ & + & - & - & - & - & + & + \\
+ & - & - & + & - & + & + & -
\end{pmatrix}
\]

\begin{proposition}
Let $G$ be a graph and let $m \geq 2$ be an integer such that:
\begin{itemize}
\item there exists an Hadamard matrix of order $m-1$;
\item the graph $G$ admits an $m$-cycle double cover.
\end{itemize}
Then, $\Phi_{m-1}^1(G)=2$.
\end{proposition}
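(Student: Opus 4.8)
The plan is to mimic the construction behind Propositions~\ref{prop:manhattan_3d} and \ref{prop:4ocdc-2flow}: we want to choose, for each of the $m$ cycles in the double cover, a vector in $\mathbb{R}^{m-1}$ so that the flow value on any edge (a sum of exactly two of these vectors, since an edge lies in exactly two cycles of a CDC, with appropriate signs coming from the orientations) has Manhattan norm exactly $1$. Since every edge is covered exactly twice, if the edge lies in cycles $C_i$ and $C_j$ oriented so that on that edge they induce directions $\varepsilon_i, \varepsilon_j \in \{+1,-1\}$, the flow value will be $\varepsilon_i v_i + \varepsilon_j v_j$ for the chosen vectors $v_1,\dots,v_m$. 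So it suffices to find $v_1,\dots,v_m \in \mathbb{R}^{m-1}$ such that $\|v_i \pm v_j\|_1 = 1$ for all $i \ne j$ (and hence in particular $\|v_i\|_1 = 1$ when we need to account for repeated cycles / edges in only one cycle, though in a genuine CDC every edge is in two distinct cycle-members). Actually, to be safe against a cycle appearing with multiplicity, it is cleanest to also require each $\|v_i\|_1$ to be small enough; but let us focus on the main point.

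The key idea is to read the rows of the Hadamard matrix of order $m-1$ as the vectors. Let $H$ be a Hadamard matrix of order $m-1$ with rows $h_1,\dots,h_{m-1} \in \{+1,-1\}^{m-1}$, and set $h_m = -\sum_{k=1}^{m-1} \alpha$ for a suitable scalar — more precisely, I would take $v_k = \tfrac{1}{2(m-1)} h_k$ for $k = 1,\dots,m-1$, and let $v_m$ be whatever makes $\sum v_i$ behave, but in fact the cleanest route is: assign to the $m$ cycles the $m$ vectors $v_1,\dots,v_m$ where $v_k = c\,h_k$ for $k \le m-1$ and $v_m = -c\sum_{k\le m-1} h_k = c\cdot(-(m-1),0,\dots,0)$-type vector only if the all-ones row is present; to avoid this asymmetry I would instead use the \emph{bordered} $m\times(m-1)$ array obtained from an $m\times m$ Hadamard matrix by deleting one column. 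That is: take a Hadamard matrix $\tilde H$ of order $m$ (which exists iff one of order $m-1$ does, by appending/removing a row-column of all ones after normalization — wait, this needs $m \equiv 0 \bmod 4$ vs $m-1 = 3$; note the order-$m-1$ Hadamard matrix forces $m-1 \in \{1,2\}$ or $4 \mid m-1$, and then $m$ is $2,3$ or $\equiv 1 \bmod 4$, for which an $m\times m$ Hadamard matrix need not exist). So this shortcut is \emph{not} available, and we must work directly with the $(m-1)$-order matrix.

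So here is the construction I would actually carry out. Let $h_1,\dots,h_{m-1}$ be the rows of a normalized Hadamard matrix $H$ of order $m-1$ (first row all $+1$). Any two distinct rows agree in exactly $(m-1)/2$ coordinates and differ in $(m-1)/2$, so $\|h_i - h_j\|_1 = 2\cdot(m-1)/2 = m-1$ and $\|h_i + h_j\|_1 = 2\cdot(m-1)/2 = m-1$ as well, while $\|h_i\|_1 = m-1$. Thus \emph{all} the relevant sums and differences among $h_1,\dots,h_{m-1}$ have $\ell^1$-norm exactly $m-1$. To handle the $m$-th cycle, observe $\sum_{i=1}^{m-1} h_i$ has $i$-th coordinate equal to $(m-1)$ if column $i$ is the all-$+$ column (the first one) and $0$ otherwise (columns orthogonal to the first sum to zero), so $\|\sum_{i=1}^{m-1} h_i\|_1 = m-1$; set $v_m$ proportional to $-\sum h_i$ — but then $\|v_m \pm v_j\|_1$ for $j\le m-1$ need not be $m-1$. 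To fix this I would instead enlarge: there is an $m$-th $\{\pm1\}$-vector $g$ in $\mathbb{R}^{m-1}$, namely $g = (1,-1,-1,\dots,-1)$ or more symmetrically use that the set $\{\pm h_1,\dots,\pm h_{m-1}\}$ together is what the cycles can land on; the correct statement to prove is simply that scaling by $c = \tfrac{1}{2(m-1)}$ and assigning $v_i = c h_i$ to cycle $C_i$ for $i\le m-1$ and $v_m = -c\sum_{i\le m-1} h_i$ to $C_m$ gives, on every edge, a flow value that is $c$ times a vector of the form $\pm h_i \pm h_j$ or $\pm h_i \mp \sum_k h_k$; compute each such $\ell^1$-norm to be in $\{m-1, \text{something} \le 2(m-1)\}$ and argue it is \emph{at least} $1$ after scaling and \emph{at most} $2 - 1 = 1$... which forces everything to equal $m-1$ before scaling. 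The main obstacle is precisely this last point: making the $m$-th vector interact correctly with the Hadamard rows. I expect the resolution is that the natural choice yields $\|\pm h_i \mp \sum_k h_k\|_1 = m-1$ too — indeed $\sum_k h_k - h_i$ has entries in $\{0, m-2, \dots\}$; one checks using orthogonality that exactly one coordinate is $m-1$ (from the first column, value $m-2$... hmm) — so the honest computation here is the crux, and I would verify it carefully for $m=4$ and $m=8$ first, then give the general orthogonality argument, concluding $\Phi_{m-1}^1(G) = 2$ since a unit-norm flow is exactly a $(2,m-1)$-MNZF and $\Phi_{m-1}^1(G) \ge 2$ always.
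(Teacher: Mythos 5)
There is a genuine gap, and you identify it yourself: you never resolve what vector to assign to the $m$-th cycle. Your candidate $v_m=-c\sum_{i\le m-1}h_i$ does not work. Normalizing so that the first column of $H$ is all ones, $\sum_i h_i=(m-1,0,\dots,0)$, and then $\|\pm h_j-(m-1)e_1\|_1$ equals $2m-4$ or $2m-2$, neither of which matches the value $m-1$ attained by $\|h_i\pm h_j\|_1$; so no single scaling makes all edge values have Manhattan norm in $[1,1]$. The paper's resolution is simpler than anything you try: assign the \emph{zero} vector to $C_m$, i.e.\ send flow $\frac{1}{m-1}b_i$ along $C_i$ only for $i=1,\dots,m-1$ and send nothing along $C_m$. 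An edge not in $C_m$ lies in two cycles $C_i,C_j$ with $i,j\le m-1$ and receives $\frac{1}{m-1}(\pm b_i\pm b_j)$, which has $(m-1)/2$ entries equal to $\pm\frac{2}{m-1}$ and hence Manhattan norm $1$; an edge in $C_m$ lies in exactly one other cycle $C_i$ and receives $\pm\frac{1}{m-1}b_i$, all of whose $m-1$ entries are $\pm\frac{1}{m-1}$, again of norm $1$. You actually compute the key fact $\|h_i\|_1=m-1$ but dismiss it as only relevant to "edges in only one cycle," not noticing that zeroing out $C_m$ makes every edge of $C_m$ exactly such an edge.

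Two smaller corrections: the scale factor must be $\frac{1}{m-1}$, not $\frac{1}{2(m-1)}$ (the latter would give the pair sums norm $\frac12<1$ and violate the lower bound); and the detour through an $m\times m$ Hadamard matrix is indeed unavailable, as you note, so the direct two-case analysis above is the whole proof. With the zero-vector assignment to $C_m$ inserted, the rest of your argument (rows of $H$ agree and disagree in exactly $(m-1)/2$ positions each, so all sums and differences of distinct rows have $\ell^1$-norm $m-1$, and $\Phi_{m-1}^1(G)\ge 2$ trivially) matches the paper's proof.
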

\begin{proof}
Let $C_1,\dots,C_m$ be an $m$-cycle $2$-cover of $G$ and let $b_1,\dots,b_{m-1}\in\mathbb{R}^{m-1}$ be the vectors corresponding to the rows of an Hadamard matrix of order $m-1$. 
We define the flow $\varphi$ by orienting each cycle $C_i$ and assigning a flow of value $\frac{1}{m-1}b_i$ along cycle $C_i$, for $i=1,\dots,m-1$.
For every edge $e$, we consider two cases according to whether $e$ belongs to $C_m$ or not. If $e$ does not belong to $C_m$, then the vector $\varphi(e)\in \mathbb{R}^{m-1}$ has $\frac{m-1}{2}$ entries of $\pm \frac{2}{m-1}$, with all other entries being zeroes, since any $b_i$, $b_j$ agree in exactly $(m-1)/2$ coordinates.
While, if $e$ belongs to $C_m$, then $\varphi(e)\in \mathbb{R}^{m-1}$ has all entries equal to $\pm \frac{1}{m-1}$.   
Thus, in both cases, it holds $\|\varphi(e)\|_1=1$.
\end{proof}

\section{Two-dimensional case}
\label{sec:2dim}

In this section, we focus on the two-dimensional case. Our primary goal is to establish results for cubic graphs with respect to the Manhattan and Chebyshev norms. We believe that studying these norms can also lead to new bounds in the classical Euclidean setting.

\begin{theorem}
    A cubic graph has a $(2, 2)$-ChNZF if and only if it is $3$-edge-colourable.\label{th:2_mnzf_iff_3_col}
\end{theorem}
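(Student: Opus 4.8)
The plan is to prove both directions by routing through nowhere-zero $\mathbb{Z}_2^2$-flows, exploiting the classical fact that a cubic graph is $3$-edge-colourable if and only if it admits a nowhere-zero $\mathbb{Z}_2^2$-flow. The main external input is the integral normal form of Proposition~\ref{prop:chebyshev_normal_form}. First I would record that, by the definition of the flow number, $\Phi_2^\infty(G)\ge 2$ for every bridgeless $G$, so that $G$ admits a $(2,2)$-ChNZF precisely when $\Phi_2^\infty(G)=2$.

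For ``$3$-edge-colourable $\Rightarrow$ $(2,2)$-ChNZF'', start from a nowhere-zero $\mathbb{Z}_2^2$-flow on $G$, viewed as a pair of cycles (in the broad sense) $Z_1,Z_2$ whose edge sets together cover $E(G)$. Choose orientations making $Z_1,Z_2$ oriented cycles, and define $\varphi$ as the sum of the flow sending the constant vector $(1,0)$ along $Z_1$ and the one sending $(0,1)$ along $Z_2$. This is an $\mathbb{R}^2$-flow, and since $E(G)=E(Z_1)\cup E(Z_2)$, each edge receives one of $(\pm1,0)$, $(0,\pm1)$, $(\pm1,\pm1)$, all of Chebyshev norm exactly $1$; hence $\varphi$ is a $(2,2)$-ChNZF. (Equivalently, one may phrase this using the three perfect matchings $M_1,M_2,M_3$ of the colouring and the $2$-factors $E(G)\setminus M_1$ and $E(G)\setminus M_2$.)

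For the converse, suppose $G$ has a $(2,2)$-ChNZF, so $\Phi_2^\infty(G)=2=2/1$. Proposition~\ref{prop:chebyshev_normal_form}, applied with $p=2$ and $q=1$, then furnishes a $(2,2)$-ChNZF $\varphi$ with $\varphi(e)\in\mathbb{Z}^2$ for every edge. As $1\le\|\varphi(e)\|_\infty\le r-1=1$ forces $\|\varphi(e)\|_\infty=1$, the integrality constraint leaves only the eight vectors $(\pm1,0)$, $(0,\pm1)$, $(\pm1,\pm1)$ as possible values of $\varphi(e)$. Reducing $\varphi$ modulo $2$ gives a $\mathbb{Z}_2^2$-flow (the mod-$2$ reduction of an integral flow is always a $\mathbb{Z}_2^2$-flow), and it is nowhere-zero because each of those eight vectors has an odd coordinate, hence is not congruent to $(0,0)$. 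Thus $G$ carries a nowhere-zero $\mathbb{Z}_2^2$-flow and, being cubic, is $3$-edge-colourable.

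Most of the ingredients — that sending a constant vector along an oriented cycle yields a flow, that mod-$2$ reduction of an integral flow is a $\mathbb{Z}_2^2$-flow, and the equivalence between $3$-edge-colourability and nowhere-zero $\mathbb{Z}_2^2$-flows for cubic graphs — I would merely cite or state. The one genuinely load-bearing step is the reduction to integral flow values via Proposition~\ref{prop:chebyshev_normal_form}: without it, a direct analysis of real-valued $(2,2)$-ChNZFs on snarks seems delicate, which is precisely why the analogous statement fails for the Euclidean norm. I would also note in passing that this argument in fact shows $\Phi_2^\infty(G)=2$ if and only if $G$ has a nowhere-zero $4$-flow, for every bridgeless graph $G$.
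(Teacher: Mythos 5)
Your proof is correct and takes essentially the same route as the paper: the load-bearing step in both is Proposition~\ref{prop:chebyshev_normal_form}, which reduces a $(2,2)$-ChNZF to one using the eight integral vectors of Chebyshev norm $1$, and from there the two arguments differ only in packaging. The paper's forward direction cites a $4$-OCDC rather than a two-even-subgraph cover, and its converse verifies the three-class partition directly as a proper edge-colouring instead of reducing modulo $2$ to a nowhere-zero $\mathbb{Z}_2^2$-flow, but these are interchangeable formulations of the same construction; your closing remark that the argument actually characterises $\Phi_2^\infty(G)=2$ via nowhere-zero $4$-flows for all bridgeless $G$ is a valid and worthwhile strengthening.
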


\begin{proof}
If a cubic graph is $3$-edge-colorable, then it admits a $4$-oriented cycle double cover ($4$-OCDC), see \cite{zhang_oriented_covers}.
By Proposition~\ref{prop:4ocdc-2flow} it has a $(2,2)$-ChNZF.

For the converse implication, consider a cubic graph with a $(2, 2)$-ChNZF. By Proposition \ref{prop:chebyshev_normal_form}, we can assume that the corresponding map $\varphi\colon E\to\mathbb{R}^2$ takes values in the set of points $x\in\mathbb{Z}^2$ with $\|x\|_{\infty}=1$.
Note that this set consists of exactly eight points, which we partition as follows:
\begin{align*}
&C_1=\{(1,0),(-1,0)\},\\
&C_2=\{(0,1),(0,-1)\},\\
&C_3=\{(1,1),(-1,1), (1,-1),(-1,-1)\}.
\end{align*}
This defines a map $c\colon E\to \{1,2,3\}$ by setting $c(e)=i\ $ if $\varphi(e)\in C_i$. We claim that $c$ is an edge-coloring of $G$. Indeed, if two adjacent edges had flow values in the same set $C_i$, their sum and difference would be one of the vectors $(0,0), (\pm 2,0), (0,\pm 2)$, none of which can be the flow value of the third edge incident to their common vertex.
\end{proof}

As is often the case in the study of flows, cubic graphs that are not 3-edge-colorable represent the challenging instances of this problem. In this context, we define a \emph{snark} as any $2$-connected simple cubic graph that is not 3-edge-colorable. Here, we provide a lower bound for $\Phi_2^{\infty}(G)$ for any snark $G$ in terms of its order.
The proof of Proposition \ref{prop:lowerboundsnark} is a generalization of the argument presented by Tabarelli in \cite{tabarelli_phd}, which was originally applied only to the Petersen graph.

\begin{proposition}\label{prop:lowerboundsnark}
	Let $G$ denote a snark of order $n$. Then, $$\Phi_2^{\infty}(G) \geq 2+ \frac{1}{\left\lfloor\frac{n-2}{4}\right\rfloor}.$$

\end{proposition}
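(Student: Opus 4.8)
The plan is to suppose $G$ admits a $(p/q,2)$-ChNZF with small $p/q$ and derive a contradiction with the non-3-edge-colourability of $G$. By Proposition~\ref{prop:chebyshev_normal_form}, we may assume the flow $\varphi$ takes values of the form $(k_1/q,k_2/q)$ with $1\le\|\varphi(e)\|_\infty\le p/q-1$. The key idea, generalising Tabarelli's argument for the Petersen graph, is to mimic the proof of Theorem~\ref{th:2_mnzf_iff_3_col}: partition the annulus $\{x : 1\le\|x\|_\infty\le r-1\}$ into three ``colour classes'' according to which coordinate is (in absolute value) strictly larger, say $C_1=\{|x_1|>|x_2|\}$, $C_2=\{|x_2|>|x_1|\}$, together with the diagonal set $C_3=\{|x_1|=|x_2|\}$, and define $c(e)=i$ when $\varphi(e)\in C_i$. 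When $r$ is close enough to $2$, two adjacent edges in the same class would force the third incident edge to take a value outside the annulus (its coordinates being either too small or, on the diagonal, producing a sum/difference that leaves $[1,r-1]^{\pm}$), so $c$ is a proper $3$-edge-colouring — contradiction. The quantitative content is to determine exactly how large $r-2$ can be before this breaks; the threshold will be governed by how many distinct ``levels'' the value $1/q$ allows inside the annulus, which is where the $\lfloor(n-2)/4\rfloor$ comes from.

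More precisely, I would argue that if $c$ fails to be a proper colouring, there is a vertex $v$ with two incident edges $e,f$ having $c(e)=c(f)$; analysing flow conservation at $v$ shows the third edge $g$ has a flow value whose relevant coordinate is forced into a narrow sub-interval, and iterating this along a suitable subgraph (an odd circuit, or the structure forced by $G$ being cubic and bridgeless) produces a chain of edges whose flow values on one coordinate must be pairwise separated by at least $1/q$ while all lying in $[1,r-1]$. Counting: such a chain can involve as many as roughly $n/2$ edges (half the edge set, since $G$ is cubic with $3n/2$ edges, and the monochromatic structure spreads), forcing $(r-1)-1=r-2\ge (\text{length}-1)/q$. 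Combined with $q\le$ (something controlled by $n$) from the normal form — or, better, by directly bounding the number of usable values — one gets $r-2\ge 1/\lfloor(n-2)/4\rfloor$. I would extract the exact combinatorial count by looking carefully at Tabarelli's Petersen computation (where $n=10$ gives $\lfloor 8/4\rfloor=2$, i.e.\ $\Phi_2^\infty\ge 5/2$) and identifying which step used $n=10$ only through this floor.

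The main obstacle I anticipate is the bookkeeping that turns ``$c$ is not a proper colouring'' into the sharp inequality: one must show that the monochromatic edges, together with the forced constraints at their endpoints, genuinely produce a set of $\lfloor(n-2)/4\rfloor+1$ flow values that are pairwise $\ge 1/q$ apart on a common coordinate \emph{and} that no coarser configuration suffices — equivalently, that one cannot ``recycle'' values. This likely requires choosing the three-class partition cleverly (perhaps rotating the partition by a well-chosen angle depending on where $\varphi$ concentrates, or handling the diagonal class $C_3$ with extra care since it has four points rather than two) and a parity/counting argument on the edge-colouring defect: since $G$ is not $3$-edge-colourable, the defect is nonempty, and a Kempe-chain-type argument bounds how it can be localised, forcing it to meet at least a $\tfrac14$-fraction of the vertices up to the additive constant $-2$. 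I would also double-check the denominator via the normal form of Proposition~\ref{prop:chebyshev_normal_form}: if $q$ itself can be large the argument must instead bound the number of \emph{distinct} values actually used, not $q$, so the cleanest route is to phrase everything in terms of the set $\varphi(E)\subseteq\mathbb{R}^2$ directly and its projections onto the two axes.
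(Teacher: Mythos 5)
Your proposal is a plan rather than a proof: the decisive quantitative step --- turning ``the colouring fails'' into the bound $r-2\ge 1/\lfloor(n-2)/4\rfloor$ --- is explicitly deferred (``I would extract the exact combinatorial count by looking carefully at Tabarelli's Petersen computation''), and the mechanism you sketch for it does not work as stated. You propose to find a chain of edges whose flow values on one coordinate are pairwise separated by $1/q$, and then to bound $q$; but $q$ comes from the normal form of Proposition~\ref{prop:chebyshev_normal_form} and is not controlled by $n$ at all, a problem you yourself flag in your last sentence without resolving it. Relatedly, your three-class partition by ``which coordinate dominates'' is the right idea for the threshold case $r=2$ (Theorem~\ref{th:2_mnzf_iff_3_col}), but for $r>2$ adjacent same-class edges are no longer immediately contradictory, and nothing in the proposal pins down how the defect of the colouring propagates quantitatively.

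The paper's proof uses a different and fully elementary mechanism that you should compare against. Fix $\xi=\lfloor(n-2)/4\rfloor$ and suppose every coordinate of every flow value lies in $(-1-1/\xi,\,1+1/\xi)$. Call an edge \emph{nice} for coordinate $i$ if $|\varphi_i(e)|\in[1,1+1/\xi)$ and \emph{bad} otherwise; no edge is bad for both coordinates. The two key facts are: (1) an odd edge-cut consisting of $2k+1$ nice edges forces $k+1<k(1+1/\xi)$, i.e.\ $k>\xi$, by comparing inflow at least $k+1$ against outflow strictly less than $k(1+1/\xi)$ --- this is where the bound $1/\xi$ enters, with no reference to denominators $q$; and (2) the bad subgraph $B_i$ is spanning with maximum degree $2$ (three nice values cannot sum to zero at a cubic vertex, nor can three bad ones). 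Short even paths and short odd cycles are excluded as components of $B_1$ because their boundary is a small odd cut of nice edges, and the size bound $|E(B_1)|\le\lfloor 3n/4\rfloor<4\xi$ plus a parity count of odd components forces $B_1$ to consist of odd paths and even cycles, hence to contain a perfect matching $M$; then $E(G)\setminus M$ is a $2$-factor which, since $G$ is a snark, contains an odd cycle of length at most $n/2$, whose boundary is again a forbidden small odd cut. If you want to salvage your approach, the lesson is that the quantitative engine must be an interval-width argument on odd edge-cuts applied directly to the real flow values, not a pigeonhole on a $1/q$-grid.
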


\begin{proof}
Throughout the proof, we set $\xi = \left\lfloor \frac{n - 2}{4} \right\rfloor$.

    Assume by contradiction that there exists a $2$-dimensional flow $\varphi$ of $G$ such that $\varphi = (\varphi_1(e), \varphi_2(e))$ for each edge $e \in E(G)$, with $\|\varphi(e)\|_\infty\geq 1$ and $\varphi_i(e) \in (-1-1/\xi, 1+1/\xi)$ for $i = 1, 2$.

    We say an edge $e \in E(G)$ is \emph{nice} with respect to $\varphi_i$ if $|\varphi_i(e)| \in [1, 1+1/\xi)$, \emph{bad} otherwise. Observe that an edge $e$ can be nice with respect to both $\varphi_1$ and $\varphi_2$, but it cannot be bad with respect to both $\varphi_1$ and $\varphi_2$, for otherwise $\|\varphi(e)\|_\infty<1$.

    Denote by $B_i$ the subgraph of $G$ induced by the bad edges with respect to $\varphi_i$ and by $N_i$ the one induced by the nice edges with respect to $\varphi_i$, $i = 1, 2$. By previous observation at least one of $B_1$ and $B_2$ has at most $\lfloor|E(G)|/2\rfloor=\left\lfloor\frac{3n}4\right\rfloor$ edges, say $B_1$.

    \begin{claim}
        $B_i$ is a spanning subgraph of $G$ and $\Delta(B_i) \leq 2$, for $i = 1, 2$.
    \end{claim}
    \begin{proof}
        Observe that $\Delta(N_i) \leq 2$, because the sum of three real numbers all with absolute value in the interval $[1, 1+1/\xi)$ cannot give $0$ as a result, making the Kirkoff's law impossible to be satisfied by $\varphi$ around a vertex of $G$. Hence $B_i$ is spanning, for otherwise $\Delta(N_i) = 3$ and $\Delta(B_i) \leq 2$, for otherwise $\Delta(N_{3-i}) = 3$.
    \end{proof}

    \begin{claim}
        If $C\subseteq E(N_i)$ is an odd edge-cut of $G$ for $i=1, 2$, then $|C|\geq 2\xi+3$.
    \end{claim}
    \begin{proof}
        Consider an odd edge-cut $C$ of $G$ containing $2k+1$ good edges, separating components $G_1, G_2$. WLOG assume $\varphi_i(e)\geq0$ for every edge $e$ of $C$ and more edges are directed towards $G_2$. Then, there are at least $k+1$ edges towards $G_2$, resulting in total inflow at least $(k+1)\cdot1$. Analogously, the total outflow is strictly less than $k\cdot(1+1/\xi)$. Together with the Kirkoff's law, this leads to $k+1<k\cdot(1+1/\xi)$, which is equivalent to $k>\xi$.
    \end{proof}

    \begin{claim}
        A path of length $2k$ cannot be a connected component of $B_i$, for $k=1,2,\dots, \xi-1$ and $i = 1, 2$.
    \end{claim}
    \begin{proof}
        For the sake of contradiction assume there is a path of a length $2k$, $k<\xi$ in $B_i$. Then the edges adjacent to this path are in $N_i$ and they form an odd edge-cut of $G$, containing at most $2k+3<2\xi+3$ edges, which is in contradiction with the Claim 2.
    \end{proof}

    \begin{claim}
        A cycle of length $2k+1$ cannot be a connected component of $B_i$, for $k=1,2,\dots, \xi$ and $i = 1, 2$.
    \end{claim}
    \begin{proof}
        For the sake of contradiction assume there is a cycle of a length $2k+1$, $k<\xi+1$ in $B_i$. Then the edges adjacent to this cycle are in $N_i$ and they form an odd edge-cut of $\Gamma$, containing at most $2k+1<2\xi+3$ edges, which is in contradiction with the Claim 2.
    \end{proof}

    \begin{claim}
        $E(B_i)$ cannot contain a perfect matching of $G$, for $i = 1, 2$.
    \end{claim}
    \begin{proof}
        For the sake of contradiction assume that $E(B_i)$ contains a perfect matching $M$ of $G$. Then also $E(N_{3-i})$ contains a perfect matching $M$ of $G$. Next, $F=E(G)\setminus M$ is a $2$-factor of $G$. Note that $F$ must contain an odd cycle of length $2k+1\leq\frac n2$. This is equivalent to $k\leq\xi$. Then the edges adjacent to this cycle are in $N_{3-i}$ and they form an odd edge-cut of $G$, containing at most $2k+1\leq 2\xi+1$ edges, which is in contradiction with the Claim 2.
    \end{proof}

    Note that by the Claim 3, each even path in $B_1$ contains at least $2\xi$ edges. Similarly by the Claim 4, each odd cycle in $B_1$ contains at least $2\xi+3$ edges. Let \emph{odd components} denote odd cycles and even paths. Since $4\xi=4\lfloor\frac{n-2}{4}\rfloor>\lfloor\frac{3n}{4}\rfloor\geq E(B_1)$ obviously holds for any $n\geq 10$, $B_1$ may contain at most one odd component. On the other hand, $B_1$ contains an even number of odd components. As a result, $B_1$ contains only odd paths and even cycles, but then also a perfect matching of $G$, which is in contradiction with the Claim 5.
\end{proof}

Now, we summarize some computational results about the value of  $\Phi_2^{\infty}$ for snarks of small order. A comprehensive database of all such graphs can be found in \cite{Hog}. Cyclically $4$-edge-connected snarks are available directly. We have obtained snarks with smaller cyclic edge-connectivity by filtering cubic graphs.

Table~\ref{table:snarks_flows} presents the Chebyshev (and thus also Manhattan) flow numbers of all cyclically $4$-edge-connected snarks of order at most $26$.
Notably, $7$ snarks reach the lower bound from Proposition \ref{prop:lowerboundsnark}: the Petersen graph, the Blanu\v sa snarks of order $18$ and four snarks of order $20$ (refer to the bold entries in the table).
We are not aware of any other snark, for which the bound is tight, which we further address in Problem \ref{prob:lower_bound_tight}.

\begin{table}[h!]
\centering
\begin{tabular}{|c|c|c|c|c|}
\hline
\textbf{Order$\backslash \Phi_2^{\infty}$} & $2+1/4$ & $2+1/3$ & $2+1/2$ & \textbf{Total} \\
\hline
10 & - & - & \textbf{1} & 1 \\
\hline
18 & \textbf{2} & - & - & 2 \\
\hline
20 & \textbf{4} & 2 & - & 6 \\
\hline
22 & 23 & 8 & - & 31\\
\hline
24 & 135 & 20 & - & 155 \\
\hline
26 & 1181 & 116 & - & 1297 \\
\hline
\end{tabular}
\caption{Two-dimensional Chebyshev flow numbers of small cyclically $4$-edge-connected snarks.}
\label{table:snarks_flows}
\end{table}

\begin{figure}[ht]
\centering
\includegraphics[width=8cm]{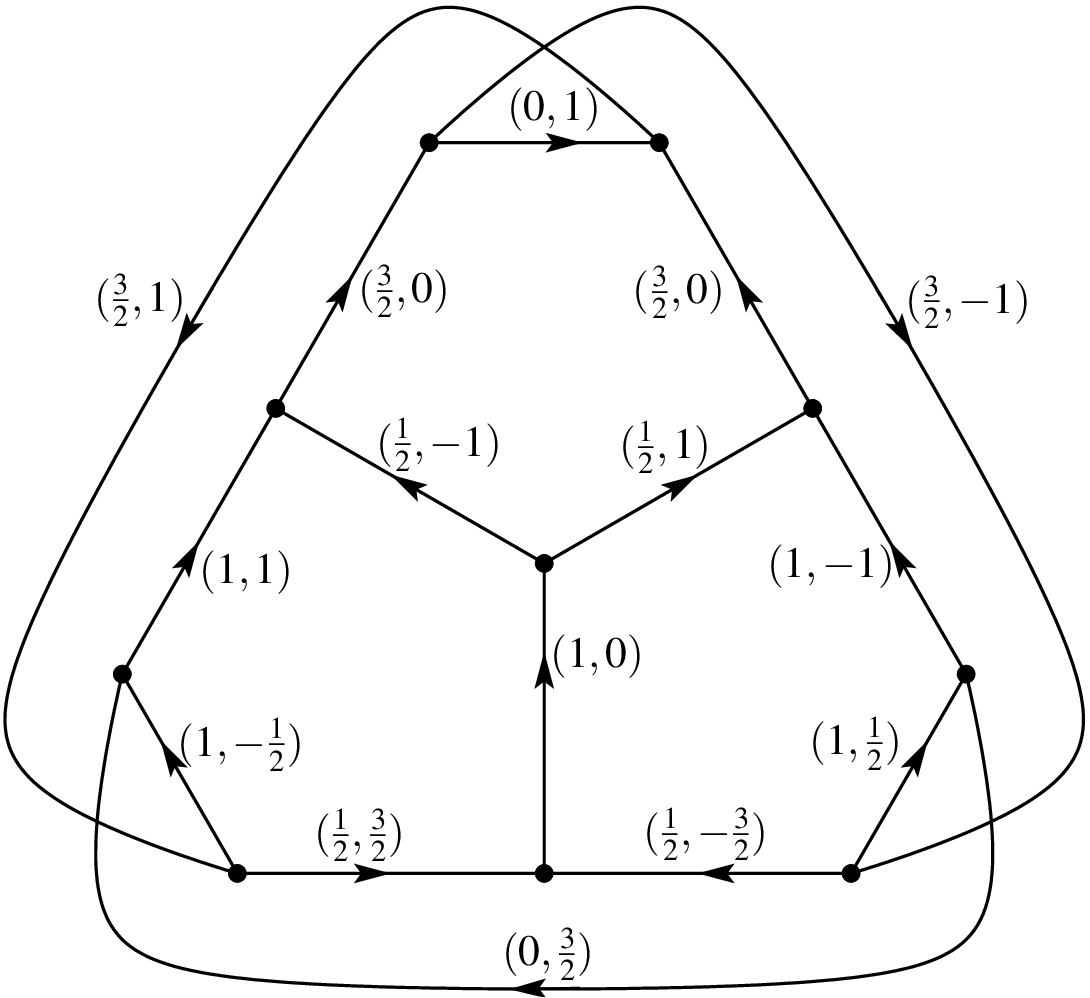}
\caption{A $(\frac{5}{2},2)$-ChNZF of the Petersen graph.}\label{fig:petersen}
\end{figure}

We have also tested all snarks up to order $20$ that are not cyclically $4$-edge-connected (usually called \emph{trivial} snarks). All of them have $\Phi_2^\infty$ equal to $5/2$ except for $9$ graphs of order $20$ with $\Phi_2^\infty$ equal to $9/4$ that are obtained by expanding a vertex to a triangle in one of the Blanu\v sa snarks, hence having cyclic connectivity equal to $3$.

Computational results suggest that $\Phi_2^{\infty}(G)$ could be at most $\frac{5}{2}$ and in particular in the form $2+\frac1k$, $k \in \mathbb Z_{>0}$, for all snarks. We leave it as an interesting open problem, see Problem \ref{prob:manhattan_greater_2.5}.

These results have been obtained by translating the problem to a mixed integer linear programming instance.
Let $G=(V,E)$ be a graph, where $V=\{1,\dots,v\}$. We orient the edges from the smaller to the larger vertex index, e.g.\ from $1$ to $3$. 
The flow value on an edge $e\in E$ is given by $\left(x^+_e-x^-_e,y^+_e-y^-_e\right)$, where $x_e^+,x_e^-,y_e^+,y_e^-\in\mathbb{R}_{\ge 0}$. 
In order to ensure $x_e^+\cdot x_e^-=0$ we require  $x_e^+-\Lambda u_e^x \le 0$ and $x_e^-+\Lambda u_e^x \le \Lambda$ using a binary variable $u_e^x$ and an upper bound $\Lambda$ on the particular coordinates. By Proposition~\ref{prop:manhattan_upper_seymour}, we use $\Lambda = 2$. Similarly, we require $y_e^+-\Lambda u_e^y \le 0$ and $y_e^-+\Lambda u_e^y \le \Lambda$ using a binary variable $u_e^y$.

We minimize $z$ subject to $x_e^++x_e^--z\le 0$ and $y_e^++y_e^--z\le 0$ for all $e\in E$. In order to ensure $\left\Vert\left(x^+_e-x^-_e,y^+_e-y^-_e\right)\right\Vert_\infty\ge 1$ 
we utilize $v_e^1,v_e^2,v_e^3,v_e^4\in\{0,1\}$ and require $x_e^++v_e^1\ge 1$, $x_e^-+v_e^2\ge 1$, $y_e^++v_e^3\ge 1$, $y_e^-+v_e^4\ge 1$, and $v_e^1+v_e^2+v_e^3+v_e^4\le 3$ 
for all edges $e\in E$. Finally, flow conservation for vertex $i\in V$ is modeled via
\begin{equation*}
  \sum_{j\in V\,:\, i<j,\{i,j\}\in E} x_{\{i,j\}}^+  \,+\, \sum_{j\in V\,:\, i>j,\{i,j\}\in E} x_{\{i,j\}}^- \,=\, 0 
\end{equation*} 
and similarly for the $y$ variables.

Since we have obtained a bounded mixed-integer linear program with integer coefficients, we can argue that it's optimal solution -- the number $\Phi_2^\infty(G)$ -- is rational.
This can be easily generalised to an alternate proof of rationality of $\Phi_d^\infty$ for any dimension $d$. This approach is more suitable for effective computer experiments compared to Theorem~\ref{th:rationality}.

\section{{\it t}-flow-pairs}
\label{sec:circular_decompositions}

A common approach in constructing flows is to cover the edges of a graph by two or more flows such as Seymour's Lemma~\ref{lem:2_flow_3_flow_seymour}, which provides a covering with a $2$-flow and a $3$-flow. Since this approach proved to be useful, it was generalised by Xie and Zhang \cite{Xie} who restricted possible pairs of flow values.
The study of two-di\-men\-sio\-nal flows naturally relates to the study of flow pairs. Thus, in this section, we introduce another generalisation of the flow pair from Seymour's lemma and propose a conjecture that implies a stronger upper bound on the two-dimensional Chebyshev flow number and also the $5$-flow conjecture.

\begin{definition}
Let $0 < t \leq 1$ be a rational number. A \emph{$t$\nobreakdash-flow\nobreakdash-pair} of a graph is a pair consisting of a $2$\nobreakdash-flow~$\varphi_2$ and a $(p+q+1)$\nobreakdash-flow~$\varphi_{p+q+1}$ with the same orientation for some positive integers $p$ and $q$ such that $t = p/q$, with the property that for every edge~$e$, if $\varphi_2(e) = 0$, then $|\varphi_{p+q+1}(e)| \geq q$.
\end{definition}

Note that the values of $p$ and $q$ in the previous definition are not uniquely determined; however, what matters for our purposes is their ratio $t$ as we show in the following paragraph.

If we have a $(dp+dq+1)$-flow $\varphi$, by a similar argument as in Proposition \ref{prop:chebyshev_normal_form}, either all flow values are multiples of $d$ or we can find a cycle, where no flow value is divisible by $d$. Then, we evaluate differences of those flow values and their nearest multiples of $d$. By picking the smallest one and sending it along the cycle, no flow value exceeds the bounds $dq$ and $dp+dq$ (since both are multiples of $d$) and the number of flow values not divisible by $d$ decreases. Hence we are able to adjust the flow $\varphi$ to a flow $\varphi'$ such that all flow values are divisible by $d$, and so we can construct a $(p+q+1)$-flow $\varphi''$ by dividing each flow value by $d$. Moreover, it is easy to see, that $|\varphi''|\geq q$ whenever $|\varphi|\geq dq$, and thus also the relation between the $2$-flow and $\varphi''$ is fulfilled. The converse direction is trivial.

\begin{proposition}
    Consider a bridgeless graph $G$ with a $t$\nobreakdash-flow\nobreakdash-pair. Then, $G$ admits a $\left(2+t,2\right)$-ChNZF and a $\left(4+2t, 1\right)$-NZF.\label{prop:nzf_from_two_circulations_generalised}
\end{proposition}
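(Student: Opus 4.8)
The plan is to start from a $t$-flow-pair $(\varphi_2,\varphi_{p+q+1})$ with $t=p/q$ and build the two desired flows directly. For the Chebyshev flow, I would first rescale the integer $(p+q+1)$-flow so that its values lie in $[-(p+q),-1]\cup[1,p+q]$, then divide by $q$ to obtain a flow $\psi$ with $\|\psi(e)\|_\infty$-relevant bound $\le (p+q)/q = 1+t$ and, crucially, $|\psi(e)|\ge 1$ on every edge where $\varphi_2(e)=0$ (this is exactly the defining property $|\varphi_{p+q+1}(e)|\ge q$). The two-dimensional flow is then $\varphi(e) := (\varphi_2(e),\psi(e))$, with the same shared orientation, so flow conservation holds coordinatewise. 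I would then check $1\le \|\varphi(e)\|_\infty \le (2+t)-1 = 1+t$ edge by edge: the upper bound is $\max\{|\varphi_2(e)|,|\psi(e)|\}\le \max\{1,1+t\}=1+t$; for the lower bound, if $\varphi_2(e)\ne 0$ then $|\varphi_2(e)|=1$ gives $\|\varphi(e)\|_\infty\ge 1$, and if $\varphi_2(e)=0$ then $|\psi(e)|\ge 1$ does the same. This yields a $(2+t,2)$-ChNZF.

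For the $\left(4+2t,1\right)$-NZF, the natural move is to collapse the two-dimensional flow into one dimension by taking a suitable integer combination, mimicking the classical construction in Seymour's $6$-flow theorem where a $2$-flow and a $3$-flow combine via $\varphi_2 + 3\varphi_3$ (or $2\varphi_3+\varphi_2$) into a $6$-flow. Here I would work with the integer flows: set $\Psi := \varphi_{p+q+1} \cdot 2 + \varphi_2 \cdot (\text{something})$, or more cleanly scale first. Concretely, consider $\theta := 2\varphi_{p+q+1} + q\,\varphi_2$ (integer-valued, same orientation, hence a $\mathbb{Z}$-flow). On an edge with $\varphi_2(e)=\pm1$, the term $q\varphi_2(e)=\pm q$ shifts the value $2\varphi_{p+q+1}(e)\in[-2(p+q),2(p+q)]$, but because $\varphi_{p+q+1}(e)$ can be $0$ there, $\theta(e)$ could be $\pm q\ne 0$; on an edge with $\varphi_2(e)=0$ we have $|\theta(e)| = 2|\varphi_{p+q+1}(e)|\ge 2q>0$. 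So $\theta$ is nowhere zero, and $|\theta(e)| \le 2(p+q)+q = 2p+3q$. Dividing by $q$ gives a circular flow with values of absolute value in $[1, (2p+3q)/q] = [1, 3+2t]$, i.e. a $(4+2t,1)$-NZF. I should double-check that the lower bound $1$ is genuinely attained/respected: on $\varphi_2(e)=0$ edges we get $\ge 2$, and on $\varphi_2(e)\ne0$ edges we need $|\theta(e)|\ge q$, which holds since $|q\varphi_2(e)|=q$ and $2\varphi_{p+q+1}(e)$ is even while $q$ — hmm, parity is not automatic, so the cleaner choice may be to use an odd multiplier or to argue $|2\varphi_{p+q+1}(e)+q\varphi_2(e)|$ cannot be a small value because $\varphi_{p+q+1}(e)$ and $\varphi_2(e)$ are not independently free; I would instead directly verify that the only way to get $|\theta(e)|<q$ on such an edge forces $|\varphi_{p+q+1}(e)|$ small and $\varphi_2(e)\ne 0$, which is allowed, so the value could in principle be as small as... this needs care.

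The main obstacle, as the last remark signals, is getting the exact constants right in the one-dimensional bound — ensuring that the combined integer flow $\theta$ has absolute values bounded above by $2p+3q$ and bounded below by $q$ (so that after dividing by $q$ we land in $[1,3+2t]$ and the ``$+1$'' convention of a $(4+2t)$-flow is honoured). This amounts to a careful interval arithmetic: on edges with $\varphi_2(e)\ne 0$ the value $2\varphi_{p+q+1}(e)$ ranges over even integers in $[-2(p+q),2(p+q)]$ and we add $\pm q$; on edges with $\varphi_2(e)=0$ it is $2\varphi_{p+q+1}(e)$ with $|\varphi_{p+q+1}(e)|\ge q$. I would also want to confirm that the ``infimum is a minimum'' remark lets me treat $t$ as realised by an honest integer pair $(p,q)$, and invoke the normalisation argument (the one sketched in the paragraph preceding the proposition about $d$-divisibility) to replace $\varphi_{p+q+1}$ by an equivalent flow whose values are exactly confined to $\{-(p+q),\dots,-1,1,\dots,p+q\}$ with the $\ge q$ property preserved. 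Once the bookkeeping is pinned down, both conclusions follow by the coordinatewise flow-conservation argument and direct norm evaluation.
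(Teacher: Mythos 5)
Your construction of the $(2+t,2)$-ChNZF is correct and is exactly the paper's: pair $\varphi_2$ with $\varphi_{p+q+1}/q$ and check the Chebyshev norm edge by edge. (The normalisation you worry about at the end is not needed: a $(p+q+1)$-flow already takes integer values in $\{0,\pm1,\dots,\pm(p+q)\}$ by definition.)

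The one-dimensional part, however, has a genuine gap, and it is precisely the one you flag but do not resolve. Your combination $\theta=2\varphi_{p+q+1}+q\varphi_2$ puts the dominant coefficient on the wrong flow. The defining property of a $t$-flow-pair gives a lower bound on $|\varphi_{p+q+1}(e)|$ only where $\varphi_2(e)=0$; on edges with $\varphi_2(e)=\pm1$ the value $\varphi_{p+q+1}(e)$ is unconstrained, so $\theta(e)=2\varphi_{p+q+1}(e)\pm q$ can literally vanish (take $q$ even and $\varphi_{p+q+1}(e)=-q\varphi_2(e)/2$), or be as small as $1$ in absolute value when $q$ is odd, so after dividing by $q$ you do not get a nowhere-zero flow, let alone one with values of modulus at least $1$. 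The fix is to reverse the roles, as in Seymour's argument where the coefficient $3$ on the $2$-flow exceeds the maximum value $2$ of the $3$-flow: the paper takes $\varphi'=(2+p/q)\varphi_2+\varphi_{p+q+1}/q$, equivalently $(2q+p)\varphi_2+\varphi_{p+q+1}$ before rescaling. Then on edges with $|\varphi_2(e)|=1$ the reverse triangle inequality gives $|\varphi'(e)|\ge(2+p/q)-(p+q)/q=1$, on edges with $\varphi_2(e)=0$ the hypothesis gives $|\varphi'(e)|=|\varphi_{p+q+1}(e)|/q\ge1$, and the upper bound $(2+p/q)+(p+q)/q=3+2t$ lands exactly at $(4+2t)-1$. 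Your interval arithmetic for the upper bound was already right; only the choice of coefficients needed to be swapped so that the term carrying the guaranteed lower bound dominates on every edge.
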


\begin{proof}
Let $t = p/q$ for some fixed positive integers $p$ and $q$, and consider an arbitrary $t$-flow-pair $(\varphi_2, \varphi_{p+q+1})$ as in the definition. 
    Let $\varphi\colon E\to\mathbb R^2$ be the function with
    $\varphi(e)=\left(\varphi_2(e), \varphi_{p+q+1}(e)/q\right)$ for each edge $e$ of $G$. It is easy to see that $\varphi$ is a flow on $G$ and that for each edge $e$, $1\leq\|\varphi(e)\|_\infty\leq1+p/q$ holds true. Therefore, $\varphi$ is a $\left(2+t,2\right)$\nobreakdash-ChNZF on $G$.

    Now consider a flow $\varphi'$ with$$\varphi'(e)=\left(2+p/q\right)\varphi_2(e)+\varphi_{p+q+1}(e)/q.$$ Trivially, $\varphi'$ is a flow on $G$. Next,
    \begin{align*}
        |\varphi'(e)|&=\left|\left(2+p/q\right)\varphi_2(e)+\varphi_{p+q+1}(e)/q\right|\leq\\
        &\leq\left(2+p/q\right)\cdot|\varphi_2(e)|+|\varphi_{p+q+1}(e)|/q\leq\\
        &\leq\left(2+p/q\right)\cdot1+(p+q)/q=3+2p/q
    \end{align*}
    for each edge $e$. When $\varphi_2(e)$ is zero, then $|\varphi'(e)|=|\varphi_{p+q+1}(e)|/q\geq1$. On the other hand, assuming $|\varphi_2(e)|=1$, we get
    \begin{align*}
        |\varphi'(e)|&=\left|\left(2+p/q\right)\varphi_2(e)-\left(-\varphi_{p+q+1}(e)/q\right)\right|\geq\\
        &\geq\left(2+p/q\right)\cdot|\varphi_2(e)|-\left|-\varphi_{p+q+1}(e)\right|/q\geq\\
        &\geq\left(2+p/q\right)\cdot1-(p+q)/q=1
    \end{align*}
    and hence $\varphi'$ is a $\left(4+2t,1\right)$-NZF on $G$.
\end{proof}

    We now point out some notable special cases of the previous proposition. The case $t=1$, in particular with $(p,q) = (1,1)$, recovers both Seymour's 6-flow theorem and Proposition \ref{prop:manhattan_upper_seymour} on $(3,2)$-Chebyshev flows, directly from Lemma \ref{lem:2_flow_3_flow_seymour}. 
    
The case $t=1/2$ corresponds to Tutte's 5-flow conjecture and the $(5/2, 2)$-Chebyshev flow conjecture (Conjecture \ref{prob:manhattan_greater_2.5}). Consequently, proving the existence of a $1/2$-flow-pair for every bridgeless graph would be sufficient to establish both results.

Using an algorithm that exhaustively searches for flow-pairs by solving instances of the satisfiability (SAT) problem, we have found a $1/2$\nobreakdash-flow-pair for all cyclically $4$-edge-connected snarks up to 34 vertices, for all cyclically $4$-edge-connected snarks with $\Phi_1(G) = 5$ and girth at least $5$ on 36 vertices, and for snarks of oddness 4 and cyclic connectivity $4$ on 44 vertices.

We therefore leave the existence of a $1/2$-flow-pair in every bridgeless graph as a potential strengthening of Tutte's 5-flow conjecture (see Conjecture~\ref{conj:12-flow-pair}).

Now we show an additional property of flow-pairs on snarks, which may offer further insight into proving their existence.
\begin{lemma}\label{lem:2factor}
	Let $0 < t < 1$ be a positive rational number. Consider a snark $G$ with a $t$-flow-pair. Then the support of $\varphi_2$ is a $2$-factor of $G$.\label{lem:2_circ_snark_2_factor}
\end{lemma}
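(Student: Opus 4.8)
The plan is to show that $\varphi_2$, being a $2$-flow, has a support that is always an even subgraph, so the content of the statement is that this support is \emph{spanning}, i.e.\ that every vertex of $G$ is incident to an edge $e$ with $\varphi_2(e) \neq 0$. Equivalently, I want to rule out the existence of a vertex $v$ all of whose three incident edges $e_1, e_2, e_3$ satisfy $\varphi_2(e_i) = 0$. Since $G$ is a snark (cubic), the support of $\varphi_2$ is a subgraph with all degrees in $\{0, 2, 3\}$; but a $2$-flow has no odd-degree vertices in its support (the mod-$2$ boundary must vanish), so in fact all degrees are in $\{0, 2\}$, and the support is a disjoint union of cycles. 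So the claim reduces to: the support meets every vertex, i.e.\ it is a $2$-factor.

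First I would set up the dichotomy for each vertex $v$: by flow conservation mod $2$, the number of incident edges on which $\varphi_2$ is nonzero is even, hence $0$ or $2$. So I just need to exclude the case where it is $0$ at some vertex. Suppose $v$ has all three incident edges $e_1, e_2, e_3$ with $\varphi_2(e_i) = 0$. By the defining property of a $t$-flow-pair, this forces $|\varphi_{p+q+1}(e_i)| \geq q$ for $i = 1, 2, 3$. Now I would examine flow conservation for $\varphi_{p+q+1}$ at $v$: orienting so that, say, $e_1$ is incoming and $e_2, e_3$ outgoing (or all three in some configuration), the three values $\varphi_{p+q+1}(e_1), \varphi_{p+q+1}(e_2), \varphi_{p+q+1}(e_3)$, each of absolute value at least $q$ and at most $p+q$, must sum to zero (with signs according to orientation). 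The key arithmetic point: three numbers each of absolute value in $[q, p+q]$ summing to zero is possible in general (e.g.\ $q + q - 2q$ when $p \geq q$), so a purely local argument at $v$ does \emph{not} suffice — this is where the snark hypothesis and the global structure must enter.

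The real mechanism, and the step I expect to be the main obstacle, is to leverage the fact that the set of edges where $\varphi_2 = 0$ — call it $Z$ — is, whenever it is nonempty, not just an even subgraph but one that is "heavy" for $\varphi_{p+q+1}$ (every edge of $Z$ carries $|\varphi_{p+q+1}| \geq q$), and then derive a contradiction with $G$ being a snark. I would argue as follows: the support of $\varphi_2$ is an even subgraph, so its complement $Z$ is also an even subgraph; if $\varphi_2$ is not a $2$-factor, $Z$ is a nonempty even subgraph that is \emph{not} edgeless and whose vertex set is \emph{not} all of $V$ in the complement sense — more precisely there is a vertex where $Z$ has degree $3$. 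But $Z$ is an even subgraph, so it cannot have a vertex of degree $3$; contradiction. Wait — this already closes it, provided I correctly establish that $Z = \{e : \varphi_2(e) = 0\}$ is an even subgraph. And indeed it is: $\varphi_2$ takes values in $\{0, \pm 1\}$, so $Z$ is exactly the edge set where the $\mathbb{Z}_2$-reduction of $\varphi_2$ vanishes, hence $E(G) \setminus Z$ is the support of a $\mathbb{Z}_2$-flow and is even; since $G$ is cubic, at each vertex the support has degree $0$ or $2$, so $Z$ has degree $3$ or $1$ there — but $Z$ is also an even subgraph (its own $\mathbb{Z}_2$-boundary vanishes, being the sum of the all-ones vector and the support restricted to each vertex, which is $1 + 0 = 1$ or $1 + 0$... ). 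Let me be careful: at a cubic vertex, $\deg_{\text{supp}}(v) + \deg_Z(v) = 3$, and $\deg_{\text{supp}}(v) \in \{0, 2\}$, forcing $\deg_Z(v) \in \{3, 1\}$, which is odd — so $Z$ is never an even subgraph unless $Z = \emptyset$. Thus the only way to avoid a parity contradiction is $\deg_Z(v) = 1$ at every vertex, meaning $\deg_{\text{supp}}(v) = 2$ everywhere, i.e.\ the support is a $2$-factor; and $Z = \emptyset$ would make $\varphi_2$ nowhere-zero, forcing $G$ to be bipartite/$2$-edge-colourable — impossible for a cubic snark. Hence $\deg_{\text{supp}}(v) = 2$ for all $v$, which is exactly the claim. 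I would present this cleanly: the only place the $t$-flow-pair condition and the hypothesis $0 < t < 1$ are genuinely needed is to handle the degenerate possibility that a vertex has $\deg_{\text{supp}}(v) = 0$, which would require three edges with $|\varphi_{p+q+1}| \geq q$ meeting at $v$ — and I should double check whether the cubic parity argument alone already excludes $\deg_{\text{supp}}(v) = 0$ (it does: $0$ is even, but then $\deg_Z(v) = 3$ is odd, contradicting that $Z$ would need... hmm, $Z$ need not be even a priori). So the cleanest route: show $\deg_{\text{supp}}(v) \in \{0, 2\}$ (from $\varphi_2$ being a $\mathbb{Z}_2$-flow), then rule out $\deg_{\text{supp}}(v) = 0$ using the $t$-flow-pair property together with flow conservation of $\varphi_{p+q+1}$ at $v$ and the bound $t < 1$ (so $p < q$, hence $q + q - (p+q) = q - p > 0 \neq 0$ but $q + q - 2q = 0$ — need the actual orientation/sign analysis here, and this is the delicate case I'd flag as the real work), and finally rule out the all-$\deg 2$ failing globally. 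I expect the sign bookkeeping for $\varphi_{p+q+1}$ at a would-be isolated vertex of the support, using $t < 1$, to be the one genuinely fiddly step.
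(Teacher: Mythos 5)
Your final argument is exactly the paper's proof: flow conservation of $\varphi_2$ at each cubic vertex forces the support to have degree $0$ or $2$, and degree $0$ is excluded because three values of $\varphi_{p+q+1}$, each with absolute value in $[q,p+q]$, cannot sum to zero when $p<q$ (they cannot all have the same sign, and two positive with one negative give a sum of at least $q+q-(p+q)=q-p>0$). The detour through the claim that $Z=\{e:\varphi_2(e)=0\}$ is an even subgraph is a dead end, as you yourself recognize, and your earlier worry that ``a purely local argument does not suffice'' is unfounded — the local sign analysis you sketch at the end, which you flag as the remaining fiddly step, is precisely the whole proof and goes through without any global input.
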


\begin{proof}
Let $t = p/q$ for some fixed positive integers $p<q$, and consider an arbitrary $t$-flow-pair $(\varphi_2, \varphi_{p+q+1})$ as in the definition. 
	For the sake of contradiction, consider a vertex $v$ in which the number of non-zero $\varphi_2$ edges differs from two. Let $e_1$, $e_2$, $e_3$ denote the edges incident with $v$. Since $v$ fulfills the flow conservation constraint, the number of edges $e$ incident with $v$ fulfilling $\varphi_2(e)\neq 0$ is obviously even. Therefore, it has to be zero and hence, $\varphi_2(e_i)=0$ for $i=1,2,3$. This necessarily leads to $q\leq|\varphi_{p+q+1}(e_i)|\leq p+q$ for $i=1,2,3$. Without loss of generality assume that in the orientation of $\varphi_{p+q+1}$, these $3$ edges are all outgoing from $v$. Then from the flow conservation constraint, $\varphi_{p+q+1}(e_1)+\varphi_{p+q+1}(e_2)+\varphi_{p+q+1}(e_3)=0$. This may be possible only if the three flow values do not have the same sign. Without loss of generality assume that only $\varphi_{p+q+1}(e_3)$ is negative. But then,
\begin{equation*}
0=\varphi_{p+q+1}(e_1)+\varphi_{p+q+1}(e_2)+\varphi_{p+q+1}(e_3)\geq q+q+(-p-q)=q-p,
\end{equation*}
which is a sought contradiction.
\end{proof}

A direct consequence of Lemma~\ref{lem:2factor} is the following equivalent condition for the existence of a $t$-flow-pair in snarks, valid for all $t < 1$.

\begin{corollary}
Let $t = p/q < 1$ be a positive rational number. A snark $G$ admits a $t$-flow-pair if and only if there exist a perfect matching $M$ of $G$ and a $(p+q+1)$-flow~$\varphi_{p+q+1}$ such that $\left|\varphi_{p+q+1}(e)\right| \geq q$ for every edge $e \in M$.
\end{corollary}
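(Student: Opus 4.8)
The plan is to derive this corollary directly from Lemma~\ref{lem:2factor} together with the definition of a $t$-flow-pair. The statement has two directions, and the forward direction is the one where the lemma does the real work, while the reverse direction is essentially unpacking definitions.

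For the forward direction, suppose $G$ admits a $t$-flow-pair $(\varphi_2,\varphi_{p+q+1})$ with $t=p/q<1$. By Lemma~\ref{lem:2factor}, the support of $\varphi_2$ is a $2$-factor $F$ of $G$. Since $G$ is cubic, the complement $M=E(G)\setminus E(F)$ is a perfect matching of $G$, and every edge $e\in M$ satisfies $\varphi_2(e)=0$. By the defining property of a $t$-flow-pair, $\varphi_2(e)=0$ forces $|\varphi_{p+q+1}(e)|\geq q$. Thus the same flow $\varphi_{p+q+1}$ and the matching $M$ witness the right-hand side.

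For the reverse direction, suppose there exist a perfect matching $M$ and a $(p+q+1)$-flow $\varphi_{p+q+1}$ with $|\varphi_{p+q+1}(e)|\geq q$ for all $e\in M$. Let $\varphi_2$ be the characteristic $2$-flow of the $2$-factor $F=E(G)\setminus M$: since $F$ is a disjoint union of cycles, sending a unit of flow around each cycle (with a consistent orientation, which can be chosen to agree with the orientation of $\varphi_{p+q+1}$ by reversing individual edge values as needed, or more simply by taking the $\mathbb{Z}_2$-valued indicator and noting a $2$-flow is orientation-agnostic) gives a $2$-flow whose support is exactly $F$. Then $\varphi_2(e)=0$ precisely when $e\in M$, and for such $e$ we have $|\varphi_{p+q+1}(e)|\geq q$ by hypothesis, so $(\varphi_2,\varphi_{p+q+1})$ is a $t$-flow-pair.

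I do not anticipate a serious obstacle here; the corollary is genuinely a corollary. The one point requiring a line of care is the orientation bookkeeping in the reverse direction: the definition of a $t$-flow-pair demands that $\varphi_2$ and $\varphi_{p+q+1}$ share the same orientation of the edges, and a $2$-flow supported on a union of cycles can always be realized with any prescribed orientation of the edge set (flipping an edge just negates its value, and for a $2$-flow $1$ and $-1$ are interchangeable), so this matching of orientations is free. After that observation, both implications are immediate from Lemma~\ref{lem:2factor} and the definition.
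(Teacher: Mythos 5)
Your proof is correct and matches the paper's intent exactly: the corollary is stated there as a direct consequence of Lemma~\ref{lem:2factor} with no further argument, and your two directions (the lemma giving the $2$-factor whose complementary perfect matching carries $\varphi_2(e)=0$, and conversely the characteristic $2$-flow of $E(G)\setminus M$ reconstructing the pair) are precisely the intended unpacking. Your remark that the orientation of the $2$-flow can be made to agree with that of $\varphi_{p+q+1}$ for free is the right detail to flag, and nothing further is needed.
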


We conclude this section by comparing our new notion of $t$-flow-pairs to parity-pair-covers introduced by Xie and Zhang \cite{Xie}.
A \emph{$(h, k)$-flow parity-pair-cover} of a graph $G$ is a pair of an $h$-flow $\varphi_1$ and a $k$-flow $\varphi_2$ on $G$ such that for each edge $e \in E(G)$
the values $\varphi_1(e)$ and $\varphi_2(e)$ have the same parity and at least one of them is nonzero.
They proved that each graph with a $(3, 3)$-flow parity-pair-cover admits a nowhere-zero $5$-flow and also a $5$-cycle double cover. Also, they proposed a conjecture that each bridgeless graph admits a $(3, 3)$-flow parity-pair-cover, which is an analogy of our Conjecture \ref{conj:12-flow-pair}.

Both these conjectures offer a possible approach to proving the $5$-flow conjecture. On the one hand, no $(5/2, 2)$-ChNZF can be obtained by a linear combination of two flows from some $(3, 3)$-flow parity-pair-cover. On the other hand, we are aware of no simple reason why the existence of a $1/2$-flow-pair on a graph should imply the existence of a $5$-cycle double-cover.

\section{Conjectures and open problems}\label{sec:openproblems}

This paper represents the first investigation of flow problems using norms different from the usual Euclidean one. Beyond the problems already discussed in the preceding sections, we present the following two as the most relevant conjectures derived from our work.

\begin{conjecture}
For all bridgeless graphs $G$, the inequality $\Phi_2^\infty(G)\leq\frac52$ holds. \label{prob:manhattan_greater_2.5}
\end{conjecture}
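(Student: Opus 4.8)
By Corollary~\ref{cor:manhequalcheb} the statement is equivalent to $\Phi_2^1(G)\le\tfrac52$, so it suffices to produce, for every bridgeless $G$, a $(\tfrac52,2)$-ChNZF (equivalently, an MNZF). Proposition~\ref{prop:manhattan_upper_seymour} already yields the weaker bound $3$, and the natural way to sharpen it is Proposition~\ref{prop:nzf_from_two_circulations_generalised}, which converts a $1/2$-flow-pair into a $(\tfrac52,2)$-ChNZF. Hence the whole statement follows from the existence of a $1/2$-flow-pair in every bridgeless graph (Conjecture~\ref{conj:12-flow-pair}), and this is the object I would attack first.

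I would then reduce to snarks in the usual way. Subdividing and suppressing degree-$2$ vertices leaves $\Phi_2^\infty$ unchanged, and replacing a vertex of degree $\ge 4$ by a short cycle preserves bridgelessness while only increasing $\Phi_2^\infty$ (contracting the cycle back recovers a flow with the same range), so it is enough to treat bridgeless cubic graphs. A $3$-edge-colourable cubic graph already has $\Phi_2^\infty=2<\tfrac52$ by Theorem~\ref{th:2_mnzf_iff_3_col}, and, modulo checking that the standard $2$- and $3$-edge-cut reductions carry over to norm-constrained real flows, the residual case is a snark $G$. On a snark, Lemma~\ref{lem:2factor} and its corollary reduce a $1/2$-flow-pair to the following concrete object: a perfect matching $M$ (so that $F=G-M$ is the $2$-factor carrying the all-ones $2$-flow $\varphi_2$) together with a compatibly oriented integer flow $\varphi_4$ with values in $\{0,\pm1,\pm2,\pm3\}$ such that $|\varphi_4(e)|\ge 2$ for every $e\in M$.

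The construction step is now purely combinatorial and, I expect, the technical heart of the matter. Writing $F$ as a disjoint union of circuits, $\varphi_4$ is determined by its restriction to $M$ together with one free integer per circuit of $F$: traversing a circuit $C$ of $F$, flow conservation forces the signed values of the matching edges pendant to $C$ (chords within $C$ cancelling in pairs) to sum to zero, and then every edge of $C$ carries a partial sum of these values shifted by the free parameter, which must lie in $[-3,3]$. So the task is to choose $\varphi_4|_M$ with values in $\{\pm2,\pm3\}$ subject to the per-circuit vanishing condition, while keeping all partial sums within a window of width $6$. I would attack the feasibility of this bounded integer system by a discharging/parity argument organised around the odd circuits of $F$, noting that $M$ must be chosen with care: an odd circuit of $F$ with only one or three external pendant matching edges is already infeasible, since no set of one or three numbers from $\{\pm2,\pm3\}$ sums to zero, so one wants $F$ to have few odd circuits, each with enough external pendants.

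The main obstacle is intrinsic: by Proposition~\ref{prop:nzf_from_two_circulations_generalised} with $t=\tfrac12$ (so $4+2t=5$), a $1/2$-flow-pair in every bridgeless graph would simultaneously prove Tutte's $5$-flow conjecture, so this route cannot be genuinely easier than a central open problem. Conversely, a circular $5$-flow does not yield a $(\tfrac52,2)$-ChNZF, because only linear maps preserve $\mathbb R^d$-flows and a linear image of a $1$-dimensional flow is supported on a line, whose range cannot be compressed into the square annulus $1\le\|x\|_\infty\le\tfrac32$; so Conjecture~\ref{prob:manhattan_greater_2.5} appears to be at least as strong as, and plausibly strictly stronger than, the $5$-flow conjecture. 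A possibly more tractable relaxation is to drop the requirement that the first coordinate be a $2$-flow and instead seek any pair of compatibly oriented $\mathbb Z$-flows $(\varphi_1,\varphi_2)$ with $(\varphi_1(e),\varphi_2(e))\in\{-3,\dots,3\}^2\setminus\{-1,0,1\}^2$ for every edge $e$ (which, by Proposition~\ref{prop:chebyshev_normal_form}, is exactly what a $(\tfrac52,2)$-ChNZF amounts to after scaling); this is weaker than a $1/2$-flow-pair and is not obviously tied to the $5$-flow conjecture. A realistic intermediate goal is therefore to establish $\Phi_2^\infty(G)\le\tfrac52$ for snarks of oddness $2$, where $F$ can be chosen with only two odd circuits and the routing argument above has the most room, and to upgrade the existing computational verification to a proof for cyclically $4$-edge-connected snarks of bounded oddness.
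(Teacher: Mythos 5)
The statement you are addressing is posed in the paper as an open conjecture (Conjecture~\ref{prob:manhattan_greater_2.5}); the paper contains no proof of it, and your proposal does not supply one either. What you have written is a research plan whose individual reductions are sound --- passing to the Manhattan norm via Corollary~\ref{cor:manhequalcheb}, reducing to cubic graphs and then to snarks, and finally, via Lemma~\ref{lem:2factor} and its corollary, reducing a $1/2$-flow-pair to a perfect matching $M$ together with a $4$-flow satisfying $|\varphi_4(e)|\ge 2$ on $M$ --- but the step that would actually establish the bound, namely the construction of that object (or of the per-circuit routing with all partial sums confined to a window of width $6$), is precisely the step you defer to a future ``discharging/parity argument.'' That missing step is the paper's Conjecture~\ref{conj:12-flow-pair}, which, as you correctly note via Proposition~\ref{prop:nzf_from_two_circulations_generalised} with $t=1/2$, would simultaneously prove Tutte's $5$-flow conjecture; so nothing short of a major breakthrough closes this gap, and your text does not attempt to.

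Two further points. First, your assertion that Conjecture~\ref{prob:manhattan_greater_2.5} ``appears to be at least as strong as'' the $5$-flow conjecture is not justified: the fact that one transfer mechanism (linear images of a $1$-dimensional flow) fails establishes no implication in either direction, and the paper claims none --- indeed its computational data (four snarks on $20$ vertices with $\Phi_1=9/2$ but $\Phi_2^\infty=7/3$) show that neither of the inequalities $\Phi_1\le 2\Phi_2^\infty$ and $\Phi_1\ge 2\Phi_2^\infty$ holds in general. Second, your reformulation of a $\left(\frac52,2\right)$-ChNZF as a pair of compatibly oriented integer flows with values in $\{-3,\dots,3\}^2\setminus\{-1,0,1\}^2$ is correct (it follows from Proposition~\ref{prop:chebyshev_normal_form} with $q=2$ after scaling) and is genuinely weaker than a $1/2$-flow-pair, since the first coordinate need not be a $2$-flow; this is a worthwhile intermediate target not explicitly isolated in the paper, but it too remains unproven, and the only support the paper offers for the conjecture is the computational evidence of Table~\ref{table:snarks_flows}.
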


\begin{conjecture}\label{conj:12-flow-pair}
    For each bridgeless graph there exists a $1/2$-flow-pair.
\end{conjecture}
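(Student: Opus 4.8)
The plan is to split the problem according to whether $G$ admits a nowhere-zero $4$-flow. The case where it does is easy: by Tutte's theorem $G$ then carries a nowhere-zero $\mathbb{Z}_2^2$-flow, which, after fixing an orientation $D$ of $G$, we lift coordinatewise to two $\{0,\pm 1\}$-valued $\mathbb{Z}$-flows $\psi_1,\psi_2$ with respect to $D$, no edge being zero in both. Then $(\varphi_2,\varphi_4)=(\psi_1,2\psi_2)$ is a $1/2$-flow-pair: $\varphi_4$ takes values in $\{0,\pm 2\}$, and whenever $\varphi_2(e)=0$ we have $\psi_2(e)\neq 0$, so $|\varphi_4(e)|=2$. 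In particular every $4$-edge-connected graph (by Jaeger's theorem) and every $3$-edge-colourable cubic graph is settled this way, so from now on one may assume $G$ has no nowhere-zero $4$-flow.

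Next I would reduce to cubic graphs. A vertex of degree two is suppressed by merging its two edges, which necessarily carry equal values in both $\varphi_2$ and $\varphi_4$, and a vertex of degree at least $4$ is split using Fleischner's splitting lemma so that the graph stays bridgeless; a $1/2$-flow-pair of the split graph descends to one of the original graph, since the two edges created by ``un-splitting'' a merged edge simply inherit its $\varphi_2$- and $\varphi_4$-values, which preserves the implication ``$\varphi_2(e)=0\Rightarrow|\varphi_4(e)|\geq 2$''. Iterating, it suffices to prove the conjecture for cubic graphs, and by the first paragraph only for those without a nowhere-zero $4$-flow, so the remaining instances are essentially the snarks.

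For a snark $G$, Lemma~\ref{lem:2factor} and the corollary following it reduce the task to exhibiting a perfect matching $M$ of $G$ together with a $4$-flow $\varphi_4$ (a $\mathbb{Z}$-flow with values in $\{0,\pm 1,\pm 2,\pm 3\}$) such that $|\varphi_4(e)|\geq 2$ for every $e\in M$. One concrete line of attack starts from a putative nowhere-zero $5$-flow $\phi$ of $G$ and asks when it ``comes from'' a $1/2$-flow-pair via the identity $\phi=\tfrac52\varphi_2+\tfrac12\varphi_4$, equivalently $\varphi_4=2\phi-5\varphi_2$. Requiring $\varphi_4$ to take values in $\{-3,\dots,3\}$ forces $\varphi_2(e)=\operatorname{sign}\phi(e)$ on every edge with $|\phi(e)|\geq 2$, and leaves exactly the choice $\varphi_2(e)\in\{0,\operatorname{sign}\phi(e)\}$ free on the edges with $|\phi(e)|=1$; moreover one checks that on any such choice the resulting $\varphi_4$ automatically meets the bound $|\varphi_4|\geq 2$ wherever $\varphi_2=0$. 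Hence the entire task collapses to this: pick a nowhere-zero $5$-flow $\phi$ and a set of edges with $|\phi|=1$ to ``switch off'' so that the modified sign pattern $\varphi_2$ becomes a genuine $\mathbb{Z}$-flow; its support is then the desired $2$-factor $F=G\setminus M$ and $\varphi_4:=2\phi-5\varphi_2$ the desired $4$-flow.

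The main obstacle is precisely this last step, and it is essentially unavoidable: since a $1/2$-flow-pair yields a circular $5$-flow by Proposition~\ref{prop:nzf_from_two_circulations_generalised}, Conjecture~\ref{conj:12-flow-pair} implies Tutte's $5$-flow conjecture, so any complete proof must in particular prove $5$FC. A realistic programme is therefore the standard one for statements of $5$FC-strength: show that a smallest counterexample can be assumed to be a cyclically $5$-edge-connected snark of girth at least $5$ with large oddness by the reduction techniques available for the $5$-flow conjecture, observe that such a graph must escape all the families already verified by the SAT search (cyclically $4$-edge-connected snarks up to $34$ vertices, and the further families listed in Section~\ref{sec:circular_decompositions}), and then look for a structural lemma tying the oddness of a snark — the odd components present in every $2$-factor — to the existence of a $4$-flow that is large on a complementary perfect matching, i.e.\ to a good choice of the ``switch-off'' set above. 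We presently see no such lemma, and we are likewise unaware of any implication from $1/2$-flow-pairs to $5$-cycle double covers, so the parallel with the Xie--Zhang conjecture on $(3,3)$-parity-pair-covers remains only an analogy.
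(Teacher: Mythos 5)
This statement is a conjecture in the paper, not a theorem: the authors deliberately leave it open, precisely because (via Proposition~\ref{prop:nzf_from_two_circulations_generalised} with $t=1/2$) it implies Tutte's $5$-flow conjecture. Your proposal, to its credit, recognises this and does not claim to close the argument -- but that means it is not a proof. The gap is the entire main case: after your (correct) disposal of graphs with a nowhere-zero $4$-flow via a $\mathbb{Z}_2^2$-flow lifted to two $\{0,\pm1\}$-valued integer flows, and your reduction to cubic graphs, everything rests on producing, for an arbitrary snark, a perfect matching $M$ and a $4$-flow $\varphi_4$ with $|\varphi_4(e)|\ge 2$ on $M$. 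Your reformulation via $\varphi_4=2\phi-5\varphi_2$ is a correct piece of bookkeeping (the sign of an integer nowhere-zero $5$-flow is forced on edges with $|\phi(e)|\ge 2$ and free exactly on edges with $|\phi(e)|=1$), but it presupposes a nowhere-zero $5$-flow $\phi$, whose existence is itself the open conjecture, and even granted $\phi$ it reduces the problem to choosing a ``switch-off'' set of $|\phi|=1$ edges making $\varphi_2$ a flow with $2$-factor support -- a step for which you explicitly state you see no lemma. So no new case beyond ``$G$ has a nowhere-zero $4$-flow'' is actually established.

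The partial content that is sound -- the $4$-flow case, the reduction to cubic graphs, the use of Lemma~\ref{lem:2factor} and its corollary to recast the problem as a matching-plus-$4$-flow condition, and the observation that the conjecture is at least as strong as the $5$-flow conjecture -- is consistent with what the paper itself says in Section~\ref{sec:circular_decompositions}, where the evidence offered is computational (SAT searches over snark families) rather than a proof. If you want to contribute something verifiable here, the realistic targets are the ones the paper flags: Problem~\ref{problem:weakerthanTutte} (a $t$-flow-pair for some $1/2<t<1$, which would sit strictly between Seymour's $6$-flow theorem and the $5$-flow conjecture), or a proof that a minimal counterexample to Conjecture~\ref{conj:12-flow-pair} must be a cyclically $5$-edge-connected snark of girth at least $5$, which would at least align the conjecture with the known reductions for $5$FC.
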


The former concerns a general upper bound for the $2$-dimensional Chebyshev flow number. The latter, as explained in the preceding section, proves to be a stronger version of both the Tutte's $5$-flow conjecture and Conjecture~\ref{prob:manhattan_greater_2.5}.

As an intermediate result, one could attempt to solve the following problem, which would correspond to a finding between Tutte's 5-flow conjecture and Seymour's 6-flow theorem. 

\begin{problem}\label{problem:weakerthanTutte}
Determine the existence of a $t$-flow-pair for some rational value $t$ with $1/2 < t < 1$.
\end{problem}

Note that the $(2+t,2)$\nobreakdash-ChNZF and $(4+2t, 1)$-NZF constructed from the $t$-flow pair in Proposition~\ref{prop:nzf_from_two_circulations_generalised} are in an interesting relation $4+2t=2(2+t)$. Hence, we have examined whether any of inequalities between $\Phi_1$ and $2\Phi_2^\infty$ hold generally.

For all snarks with $\Phi_1=5$ and the number of vertices between $28$ and $36$, we have found a $(7/3, 2)$-ChNZF and thus, $\Phi_1\leq2\Phi_2^\infty$ does not hold in general.
For the converse inequality $\Phi_1\geq2\Phi_2^\infty$, all snarks on $20$ vertices satisfy $\Phi_1=9/2$ \cite{cfn_computational_results}, but from our computational results, four of them have $\Phi_2^\infty=7/3$. 

Therefore, none of the considered inequalities holds. This part also inspired us to propose the Problem \ref{prob:manhattan_half_circular}. 

\begin{problem}
    Determine all possible values of the ratio $\Phi_1(G)/\Phi_2^\infty(G)$, where $G$ is a bridgeless graph / snark.\label{prob:manhattan_half_circular}

\end{problem}

The study of multidimensional flows with respect to norms other than the Euclidean norm has revealed interesting structural properties and notable connections to classical problems in graph theory. These findings suggest that such investigations are intrinsically relevant and merit further exploration. In this context, we consider the family of $p$-norms, defined for a vector $\boldsymbol{x} = (x_1, \dots, x_d) \in \mathbb{R}^d$ by
\[
\|\boldsymbol{x}\|_p = \left( \sum_{i=1}^d |x_i|^p \right)^{1/p},
\]
with the cases $p = 1$, $p = 2$, and $p = \infty$ corresponding to the Manhattan, Euclidean, and Chebyshev norms, respectively. 
We conjecture that the flow value exhibits a unimodular behavior with respect to $p$, and then the values obtained under the Chebyshev norm and the Manhattan norm provide lower bounds for the classical Euclidean case, which remains largely open and poorly understood.
More precisely, we propose the following conjecture.

\begin{conjecture}
For any fixed dimension $d \geq 1$ and any fixed graph $G$, the value of the multidimensional flow number $\Phi_d^p(G)$, viewed as a function of the norm parameter $p \in [1, \infty]$, is unimodal. That is, there exists $p^* \in [1, \infty]$ such that $\Phi_d^p(G)$ is non-decreasing for $p \leq p^*$ and non-increasing for $p \geq p^*$. Moreover, $p^*=2$ holds.
\end{conjecture}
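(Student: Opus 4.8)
The plan is to prove the last conjecture --- namely, that for fixed $d$ and fixed bridgeless $G$, the map $p\mapsto\Phi_d^p(G)$ is unimodal with maximum at $p^*=2$. Before attempting anything, I would split the statement into two independent halves: monotone non-decreasing on $[1,2]$, and monotone non-increasing on $[2,\infty]$. A natural first reduction is to observe that an $(r,d)$-flow with respect to $\|\cdot\|_p$ is just an $\mathbb{R}^d$-flow whose edge-values all lie in the annulus $A_p(r)=\{x: 1\le\|x\|_p\le r-1\}$, so the whole question is: given the linear subspace $\mathcal{F}\subseteq\mathbb{R}^{d|E|}$ of flows on $G$, how does the smallest $r$ for which $\mathcal{F}$ meets $(A_p(r))^{|E|}$ behave in $p$? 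Since $p\mapsto\|x\|_p$ is itself non-increasing in $p$ for each fixed $x$, the inner constraint "$\|\varphi(e)\|_p\le r-1$'' loosens as $p$ grows, but the outer constraint "$\|\varphi(e)\|_p\ge 1$'' tightens; so neither direction is monotone pointwise, and the result --- if true --- must come from a more global comparison. This is exactly why I expect the problem to be genuinely hard: the two boundary constraints pull in opposite directions, and unimodality is a statement about where the balance tips.

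First I would try to settle the right branch, $p\ge 2$, where I believe one should aim for the clean inequality $\Phi_d^p(G)\le\Phi_d^2(G)$ for all $p\ge 2$ and, more strongly, $\Phi_d^{p'}(G)\le\Phi_d^p(G)$ whenever $2\le p\le p'$. The idea is to take an optimal $(r,d)$-flow $\varphi$ for norm $\|\cdot\|_p$ and post-compose coordinatewise with a fixed nonlinear radial rescaling --- or, better, to rescale the \emph{whole} vector $\varphi(e)$ by a scalar depending only on $\|\varphi(e)\|_\infty$ or on the "shape'' of $\varphi(e)$ --- to push it into the $\|\cdot\|_{p'}$-annulus of the same or smaller width. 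The obstruction is that flow conservation is linear, so I am only allowed to apply a single \emph{linear} map to all edges simultaneously; a radial, edge-dependent rescaling destroys Kirchhoff's law. Hence for $d=2$ I would lean on the $\ell_1/\ell_\infty$ linear equivalence already established in Proposition~\ref{prop:manhattan_chebyshev_equivalence} and try to interpolate: conjugating the flow space by the linear map $\mathrm{diag}$-type rotations used there, one gets an $SO(2)$-parametrised family of norms all giving the same flow number, and one would hope $\ell_p$ for $2\le p\le\infty$ is sandwiched --- up to the allowed linear symmetries --- between $\ell_2$ and $\ell_\infty$ in a way that forces $\Phi_2^p(G)$ between $\Phi_2^\infty(G)$ and $\Phi_2^2(G)$. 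For general $d$ no such linear equivalence exists, and I would instead try a convexity/duality argument: express $\Phi_d^p(G)$ as the optimum of $\min r$ subject to $\varphi\in\mathcal F$ and $\varphi(e)\in A_p(r)$, dualise the annulus membership (the annulus is a difference of two convex bodies, so this is a difference-of-convex program, not an LP), and analyse how the value moves as the unit ball $B_p$ deforms. The key technical input I would hunt for is a statement of the form: if $B_{p'}\subseteq\lambda B_p$ and $B_p\subseteq\mu B_{p'}$ for scalars $\lambda,\mu$ depending on $d,p,p'$, then $\Phi_d^{p'}(G)-1\le$ some monotone function of $\Phi_d^p(G)-1$ using $\lambda,\mu$; but the ratios $\lambda,\mu$ for $\ell_p$ versus $\ell_{p'}$ are $d^{1/p-1/p'}$-type and do not obviously give monotonicity, which again is the crux of the difficulty.

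For the left branch $p\in[1,2]$ I would attempt the mirror-image strategy, now aiming at $\Phi_d^1(G)\le\Phi_d^p(G)\le\Phi_d^2(G)$: here the hope is that a flow valid in $\|\cdot\|_p$ can be scaled \emph{down} uniformly into the $\|\cdot\|_1$-annulus because $\|x\|_1\ge\|x\|_p$, handling the lower bound constraint for free, while the upper bound $\|x\|_1\le d^{1-1/p}\|x\|_p$ is the thing to control; symmetrically, to go up to $p=2$ one uses $\|x\|_2\le\|x\|_p$. The serious obstacle common to both branches --- and the step I expect to fight hardest --- is that this dimension factor $d^{1-1/p}$ is attained only by "spread-out'' vectors (all coordinates equal), and the optimal flow need not use such vectors; so a worst-case norm comparison is far too lossy and one must exploit the specific geometry of the flow space, presumably via Proposition~\ref{prop:chebyshev_normal_form}-type rationality/normal-form results to restrict to finitely many candidate shapes per edge. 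I would therefore first test the conjecture computationally on the snark data already in the paper (small cyclically $4$-edge-connected snarks, for which $\Phi_2^\infty$ and $\Phi_2^2=\Phi_2$ are known), to check the claim $p^*=2$ is even plausible --- in particular whether $\Phi_2^1(G)=\Phi_2^\infty(G)\le\Phi_2^2(G)$ always, which is consistent with $K_4$ ($\Phi_2^1=2<1+\sqrt2=\Phi_2$) --- before committing to the general argument. If the data confirms it, I would present the $d=2$ case in full using the linear-symmetry sandwich, and state the general-$d$ case as a partial result or leave the two monotone branches as the residual open problem, since I do not currently see how to avoid the loss in the norm comparison for $d\ge 3$.
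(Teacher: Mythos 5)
The statement you were asked to prove is one of the paper's \emph{open conjectures} (Section~\ref{sec:openproblems}); the paper offers no proof of it, so there is no argument of the authors to compare yours against. Judged on its own terms, your text is a research plan rather than a proof: you correctly isolate the central obstruction --- as $p$ grows the inner constraint $\|\varphi(e)\|_p\le r-1$ loosens while the outer constraint $\|\varphi(e)\|_p\ge 1$ tightens, so no pointwise norm comparison can decide either monotone branch, and the worst-case ball-inclusion factors of the form $d^{1/p-1/p'}$ are attained only by spread-out vectors and are too lossy --- but every concrete mechanism you then propose is one you yourself concede fails (edge-dependent radial rescaling destroys Kirchhoff's law; the difference-of-convex dualisation is not carried out), and you end by deferring both branches as ``residual open problems.'' That is an honest assessment of the difficulty, but it means no part of the conjecture is established, not even the $d=2$ case you offer to ``present in full.''

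The one step you commit to --- the $d=2$ ``linear-symmetry sandwich'' --- would in fact fail. Proposition~\ref{prop:manhattan_chebyshev_equivalence} works because the planar $\ell_1$ and $\ell_\infty$ balls are linearly equivalent (rotation by $\pi/4$ plus scaling) and flow spaces are invariant under a single global linear change of coordinates. But this is the \emph{only} exceptional linear equivalence among planar $\ell_p$ balls: for $1<p<\infty$ with $p\ne 2$, the ball $B_p\subset\mathbb{R}^2$ is not the linear image of $B_q$ for any $q\ne p$. One can see this via the John ellipse: by the dihedral symmetry of $B_p$ its maximal inscribed ellipse is a disk, so any linear map carrying $B_{p}$ onto $B_{q}$ would have to be a scaled rotation matching the contact points of the inscribed disks, and a direct computation (e.g.\ for $p=4/3$, $q=4$ at the boundary point $\bigl((3/4)^{3/4},(1/4)^{3/4}\bigr)$) shows the forced candidate map does not carry one sphere onto the other. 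Hence there is no $SO(2)$-parametrised family of norms ``all giving the same flow number'' interpolating between $\ell_\infty$ and $\ell_2$, and the sandwich argument has nothing to stand on. What remains of your proposal --- the computational sanity check on small snarks, for which $\Phi_2^\infty$ and $\Phi_2^2$ are tabulated in the paper --- is worthwhile evidence-gathering, but the conjecture itself stays exactly as open as the authors left it.
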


If this conjecture is verified, it would be interesting to further investigate lower bounds in the $2$-dimensional case, which leads us to pose the following problem:

\begin{problem} \label{prob:lower_bound_tight}
Are there infinitely many values of $n$, for which there exists a snark of order $n$ that attains the bound in Proposition~\ref{prop:lowerboundsnark}?
\end{problem}

\section{Acknowledgements}

The fourth author is funded by the EU NextGenerationEU through the Recovery and Resilience Plan for Slovakia under the project No. 09I03-03-V05-00012.

 \bibliographystyle{plain}
 \bibliography{bibliography}

\end{document}